\documentclass[numbook]{svjour3}
\usepackage{yuyuan}
\smartqed
\usepackage[utf8]{inputenc}
\usepackage{algorithm, algpseudocode, graphicx, braket, epstopdf}

\usepackage[colorlinks=true, citecolor=blue]{hyperref}
\usepackage{mathtools}
\mathtoolsset{showonlyrefs}
\usepackage{cellspace}
\setlength\cellspacetoplimit{4pt}
\setlength\cellspacebottomlimit{4pt}
\usepackage{hhline}

\marginparwidth 0pt
\oddsidemargin  0pt
\evensidemargin  0pt
\marginparsep 0pt

\topmargin   -.5in
\hoffset -0.1in
\textwidth   6.6in
\textheight  8.5 in


\def\xl{\underline{x}}
\def\xu{\overline{x}}

\def\ul{\underline{u}}
\def\uu{\overline{u}}

\def\tu{\tilde{u}}
\def\tx{\tilde{x}}

\def\kp{\tp[k]}

\renewcommand{\epsilon}{\varepsilon}


\title{Accelerated gradient sliding for structured convex optimization
}
\author{
	Guanghui Lan
	\and
	Yuyuan Ouyang
	\thanks{The authors are partially supported by National Science Foundation grants 1319050, 1637473 and 1637474, and Office of Naval Research grant N00014-16-1-2802. }
}
\institute{
	Guanghui Lan\at {H. Milton Stewart School of Industrial and Systems Engineering, Georgia Institute of Technology
		\\
		\email{george.lan@isye.gatech.edu}}
	\and
	Yuyuan Ouyang\at{Department of Mathematical Sciences, Clemson University 
		\\
		\email{yuyuano@clemson.edu}}
}

\begin{document}
	
	\maketitle
	
	\begin{abstract}
	Our main goal in this paper is to show that one can skip gradient computations 
for gradient descent type methods applied to
certain structured convex programming (CP) problems.
To this end, we first present an accelerated gradient sliding (AGS) method for minimizing the summation of 
two smooth convex functions with different Lipschitz constants. We show that the AGS method can skip
the gradient computation for one of these smooth components without 
slowing down the overall optimal rate of convergence.
This result is much sharper than the classic
black-box CP complexity results especially when the difference between the two Lipschitz constants associated with
these components is large. 
We then consider an important class of bilinear saddle point problem whose objective function is given by
the summation of a smooth component and a nonsmooth one with a bilinear saddle point structure.
Using the aforementioned AGS method for smooth composite optimization 
and Nesterov's smoothing technique, we show that one only needs ${\cal O}(1/\sqrt{\epsilon})$ gradient computations
for the smooth component
while still preserving the optimal ${\cal O}(1/\epsilon)$ overall iteration complexity for solving
these saddle point problems. We demonstrate that
even more significant savings on gradient computations can be obtained   
for strongly convex smooth and bilinear saddle point problems.
\keywords{convex programming, accelerated gradient sliding, structure, complexity, Nesterov’s method}
 \subclass{90C25 \and 90C06 \and 49M37}
	\end{abstract}

\section{Introduction}

In this paper, we show that one can skip gradient computations 
without slowing down the convergence of gradient descent type methods for solving
certain structured convex programming (CP) problems.
To motivate our study, let us first consider the following 
classic bilinear saddle point problem (SPP):
\begin{align}
	\label{eqSPP}
	\psi^*:=\min_{x\in X}\left\{ \psi(x):= f(x) + \max_{y\in Y}\langle Kx, y\rangle - J(y)\right\}.
\end{align}
Here, $X\subseteq\R^n$ and $Y\subseteq\R^m$ are closed convex sets, $K: \R^n \to \R^m$ is
a linear operator,
$J$ is a relatively simple convex function, and $f:X\to\R$ is a continuously differentiable convex function satisfying
\begin{align}
	\label{eqL}
	0 \le f(x) - l_f(u,x) \le \frac{L}{2}\|x - u\|^2,\ \forall x,u\in X,
\end{align}
for some $L>0$, 
where $l_f(u,x):=f(u) + \langle\nabla f(u), x-u\rangle$ denotes the first-order Taylor expansion of $f$ at $u$. 
Since $\psi$ is a nonsmooth convex function, traditional nonsmooth optimization methods, e.g., the subgradient method, would 
require $\cO(1/\varepsilon^2)$ iterations to find an $\varepsilon$-solution of \eqref{eqSPP}, i.e.,
a point $\bar x \in X$ s.t. $\psi(\bar x) - \psi^* \le \varepsilon$. In a landmarking work \cite{nesterov2005smooth}, 
Nesterov suggests to approximate $\psi$ by a smooth convex function
\begin{align}
	\label{eqSmoothApproxProblem}
	\psi^*_\rho:= \min_{x\in X} \left\{ \psi_\rho(x):=f(x) + h_\rho(x) \right\},
\end{align}
with
\begin{align}
	\label{eqhrho}
	h_\rho(x):=\max_{y\in Y}\langle Kx,y\rangle - J(y) - \rho W(y_0,y)
\end{align}
for some $\rho > 0$, where $y_0\in Y$ and $W(y_0,\cdot)$ is a strongly convex function.
By properly choosing $\rho$ and applying the optimal gradient method to \eqref{eqSmoothApproxProblem},
he shows that one can compute an $\varepsilon$-solution of \eqref{eqSPP} in at most
\begin{align}
\label{eqPrevOptBound}
\cO\left(\sqrt{\frac{L}{\varepsilon}} + \frac{\|K\|}{\varepsilon}\right)
\end{align}
iterations.
Following~\cite{nesterov2005smooth}, much research effort has been devoted to the 
development of first-order methods utilizing the saddle-point structure of \eqref{eqSPP} (see, e.g.,
the smoothing technique \cite{nesterov2005smooth,nesterov2005excessive,auslender2006interior,lan2011primal,d2008smooth,hoda2010smoothing,tseng2008accelerated,becker2011nesta,lan2013bundle},
the mirror-prox methods \cite{nemirovski2004prox,chen2014accelerated,he2013mirror,juditsky2011solving}, the primal-dual type methods \cite{chambolle2011first,tseng2008accelerated,esser2010general,zhu2008efficient,chen2014optimal,he2014accelerated} and their equivalent form as the alternating direction method of multipliers \cite{monteiro2013iteration,he2012convergence_siims,he2012convergence,ouyang2013stochastic,ouyang2015accelerated,he2013accelerating}). Some of these methods (e.g., \cite{chen2014optimal,chen2014accelerated,ouyang2015accelerated,he2014accelerated,lan2013bundle}) can achieve exactly the same complexity bound as in \eqref{eqPrevOptBound}.
 
One problem associated with Nesterov's smoothing scheme and the related methods mentioned above is that each iteration of 
these methods require both the computation of $\nabla f$ and the evaluation of the linear operators ($K$ and $K^T$).
As a result, the total number of gradient and linear operator evaluations will both be bounded by ${\cal O}(1/\epsilon)$. 
However, in many applications the computation of $\nabla f$ is often much
more expensive than the evaluation of the linear operators $K$ and $K^T$.
This happens, for example, when the linear operator $K$ is sparse (e.g., total variation,
overlapped group lasso and graph regularization), 
while $f$ involves a more expensive data-fitting term (see Section~\ref{secNumerical} and \cite{lan2015gradient}
for some other examples). 
In~\cite{lan2015gradient}, Lan considered some similar situation and
proposed a gradient sliding (GS) algorithm to minimize
a class of composite problems whose objective function is given by the summation 
of a general smooth and nonsmooth component. He shows that one can skip
the computation of the gradient for the smooth component from time to time, 
while still maintaining the ${\cal O}(1/\epsilon^2)$ iteration complexity bound.
More specifically, by applying the GS method in~\cite{lan2015gradient} to problem \eqref{eqSPP}, we can 
show that the number of gradient evaluations of $\nabla f$ will be bounded by
\begin{align}
\label{eqOptSmooth}
\cO\left(\sqrt\frac{L}{\varepsilon}\right),
\end{align}
which is significantly better than \eqref{eqPrevOptBound}. Unfortunately, the total number of evaluations for the linear operators $K$ 
and $K^T$ will be bounded by
\begin{align}
\label{eqSubGradGSIter}
\cO\left(\sqrt{\frac{L}{\varepsilon}} + \frac{\|K\|^2}{\varepsilon^2}\right),
\end{align}
which is much worse than \eqref{eqPrevOptBound}. 
An important yet unresolved research question is whether one can still preserve the optimal ${\cal O}(1/\epsilon)$
complexity bound in \eqref{eqPrevOptBound} for solving \eqref{eqSPP} by utilizing only ${\cal O}(1/\sqrt{\epsilon})$
gradient computations of $\nabla f$ to find an $\epsilon$-solution of \eqref{eqSPP}.
If so, we could be able to keep the total number of iterations relatively small,
but significantly reduce the total number of required gradient computations.


In order to address the aforementioned issues associated with existing solution methods for solving \eqref{eqSPP}, 
we pursue in this paper a different approach to exploit the structural information of \eqref{eqSPP}. Firstly,
instead of concentrating solely on nonsmooth optimization as in \cite{lan2015gradient},  
we study the following smooth composite optimization problem: 
\begin{align}
	\label{eqSmoothProblem}
	\phi^*:=\min_{x\in X}\left\{ \phi(x):=f(x) + h(x) \right\}.
\end{align}
Here $f$ and $h$ are smooth convex functions satisfying \eqref{eqL} and 
\begin{align}
	\label{eqM}
	0\le h(x)-l_h(u,x)\le \frac{M}{2}\|x-u\|^2,\ \forall x,u\in X,
\end{align}
respectively. It is worth noting that problem~\eqref{eqSmoothProblem} can be viewed 
as a special cases of \eqref{eqSPP} or \eqref{eqSmoothApproxProblem} (with $J=h^*$ being a strongly convex function, 
$Y=\R^n$, $K=I$ and $\rho=0$). Under the assumption that $M\ge L$, we present
a novel accelerated gradient sliding (AGS) method which can skip the computation
of $\nabla f$ from time to time. We show 
that the total number of required gradient evaluations of $\nabla f$ and $\nabla h$, respectively, 
can be bounded by  
\begin{align}
	\label{eqBoundLM}
\cO\left(\sqrt{\frac{L}{\varepsilon}}\right)	\ \ \ \mbox{and} \ \ \cO\left(\sqrt{\frac{M}{\varepsilon}}\right)
\end{align}
to find an $\varepsilon$-solution of \eqref{eqSmoothProblem}.
Observe that the above complexity bounds
are sharper than the complexity bound obtained by Nesterov's optimal method for 
smooth convex optimization, which is given by
\[
\cO\left(\sqrt{\frac{L+M}{\varepsilon}}\right).
\]
In particular, for the AGS method, the Lipschitz constant $M$ associated with $\nabla h$
does not affect at all the number of gradient evaluations of $\nabla f$. 
Clearly, the higher ratio of $M/L$ will potentially result in more savings on the gradient computation of $\nabla f$.
Moreover, if $f$ is strongly convex with modulus $\mu$, then the above two complexity
bounds in \eqref{eqBoundLM} can be significantly reduced to
\begin{align}
	\label{eqBoundLMs}
\cO\left(\sqrt{\frac{L}{\mu}} \log \frac{1}{\varepsilon}\right)	\ \ \ \mbox{and} \ \ \cO\left(\sqrt{\frac{M}{\mu}} \log \frac{1}{\varepsilon}\right),
\end{align} 
respectively, which also improves Nesterov's optimal method applied to \eqref{eqSmoothProblem}
in terms of the number gradient evaluations of $\nabla f$. 
Observe that in the classic black-box setting 
\cite{nemirovski1983problem,nesterov2004introductory}
the complexity bounds in terms of gradient evaluations of $\nabla f$ and $\nabla h$
are intertwined, and a larger Lipschitz constant $M$ will result in more gradient evaluations of $\nabla f$, 
even though there is no explicit relationship between $\nabla f$ and $M$.
In our development, we break down the black-box assumption
by assuming that we have separate access to $\nabla f$ and $\nabla h$ rather than $\nabla \phi$ as a whole.
To the best of our knowledge, these types of separate complexity bounds as in \eqref{eqBoundLM}
and \eqref{eqBoundLMs} have never been obtained before for smooth convex optimization.

Secondly, we apply the above AGS method to the smooth approximation problem~\eqref{eqSmoothApproxProblem} 
in order to solve the aforementioned bilinear SPP in~\eqref{eqSPP}. 
By choosing the smoothing parameter properly, we show that the total number
of gradient evaluations of $\nabla f$ and operator evaluations of $K$ (and $K^T$)
for finding an $\varepsilon$-solution of \eqref{eqSPP}
can be bounded by
\begin{align}
\label{eqOptsaddleL}
\cO\left(\sqrt\frac{L}{\varepsilon}\right) \ \ \ \mbox{and} \ \ \ \cO\left(\frac{\|K\|}{\varepsilon}\right),
\end{align}
respectively.  In comparison with Nesterov's original smoothing scheme and other existing
methods for solving \eqref{eqSPP}, our method can provide significant savings on the number of gradient computations of $\nabla f$
without increasing the complexity bound on the number of operator evaluations of $K$ and $K^T$. 
In comparison with the GS method in \cite{lan2015gradient}, our method
can reduce the number of operator evaluations of $K$ and $K^T$ from 
${\cal O}(1/ \epsilon^2)$ to ${\cal O}(1/\epsilon)$. Moreover, if $f$ is strongly convex with modulus $\mu$, 
the above two bounds will be significantly reduced to
\begin{align}
\label{eqOptsaddleSL}
\cO\left(\sqrt\frac{L}{\mu} \log \frac{1}{\epsilon}\right) \ \ \ \mbox{and} \ \ \ \cO\left(\frac{\|K\|}{\sqrt{\varepsilon}}\right),
\end{align}
respectively.
To the best of our knowledge, this is the first time that these tight complexity bounds
were obtained for solving the classic bilinear saddle point problem \eqref{eqSPP}.

It should be noted that, even though the idea of skipping the computation of $\nabla f$
is similar to \cite{lan2015gradient}, the AGS method presented in this paper significantly
differs from the GS method in  \cite{lan2015gradient}. In particular, each iteration of GS method
consists of one accelerated gradient iteration together with a bounded number of subgradient iterations.
On the other hand, each iteration of the AGS method is composed of an accelerated
gradient iteration nested with a few other accelerated gradient iterations to solve a
different subproblem. The development of the AGS method seems to
be more technical than GS and its convergence analysis is also highly nontrivial.

This paper is organized as follows. We first present the AGS method and discuss its convergence properties 
for minimizing the summation of two smooth convex functions \eqref{eqSmoothProblem} in Section \ref{secAGSS}. 
Utilizing this new algorithm and its associated convergence results, we study the properties of the AGS method 
for minimizing the bilinear saddle point problem \eqref{eqSPP} in Section \ref{secSPP}. 
We then demonstrate the effectiveness of the AGS method through out preliminary numerical
experiments for solving certain portfolio optimization and image reconstruction problems in Section \ref{secNumerical}.
Some brief concluding remarks are made in Section \ref{secConclusion}.

\subsection*{Notation, assumption and terminology}
We use $\|\cdot\|$, $\|\cdot\|_*$, and $\langle\cdot,\cdot\rangle$ to denote an arbitrary norm, the associated conjugate and the inner product
in Euclidean space $\R^n$, respectively. 
For any set $X$, we say that $V(\cdot,\cdot)$ is a prox-function associated with $X \subseteq \R^n$ modulus $\nu$ if there exists a 
strongly convex function $\pi(\cdot)$ with strong convexity parameter $\nu$ such that
\begin{align}
	\label{eqV}
	\begin{aligned}
	V(x,u)=&\pi(u) - \pi(x) - \langle\nabla\pi(x), u-x\rangle,\ \forall x,u\in X.
	\end{aligned}
\end{align}
The above prox-function is also known as the Bregman divergence \cite{bregman1967relaxation} (see also \cite{nesterov2005smooth,nemirovski2004prox}), 
which generalizes the Euclidean distance $\|x-u\|_2^2/2$. It can be easily seen from \eqref{eqV} and the strong convexity of $\pi$ that 
\begin{align}
\label{eqVnorm}
V(x,u)\ge \frac{\nu}{2}\|x-u\|^2 \ \ \forall x, y \in X.
\end{align}
Moreover, we say that the prox-function grows quadratically if there exists a constant $C$ such that $V(x,u)\le C\|x-u\|^2/2$. Without loss of generality, 
we assume that $C=1$ whenever this happens, i.e.,
\begin{align}
	\label{eqVquad}
	V(x,u)\le\frac{1}{2}\|x-u\|^2.
\end{align}
In this paper, we associate sets $X\subseteq \R^n$ and $Y \subseteq \R^m$ with prox-functions $V(\cdot,\cdot)$ and $W(\cdot,\cdot)$ with moduli $\nu$ and $\omega$
w.r.t. their respective norms in $\R^n$ and $\R^m$.

For any real number $r$, $\lceil r\rceil$ and $\lfloor r\rfloor$ denote the nearest integer to $r$ from above and below, respectively. We denote the set of nonnegative and positive real numbers by $\R_+$ and $\R_{++}$, respectively.

\section{Accelerated gradient sliding for composite smooth optimization}
\label{secAGSS}
In this section, we present an accelerated gradient sliding (AGS) algorithm for solving the smooth composite
optimization problem in \eqref{eqSmoothProblem} and discuss its convergence properties.  Our main objective
is to show that the AGS algorithm can skip the evaluation of $\nabla f$ from time to time and achieve better complexity bounds in terms of gradient computations than the classical optimal 
first-order methods applied to \eqref{eqSmoothProblem} (e.g., Nesterov's method in \cite{nesterov1983method}).
Without loss of generality, throughout this section we assume that $M\ge L$ in \eqref{eqL} and \eqref{eqM}. 



The AGS method evolves from the gradient sliding (GS) algorithm in \cite{lan2015gradient}, which was designed
to solve a class of composite convex optimization problems with the
objective function given by the summation of a smooth and nonsmooth component.
The basic idea of the GS method is to keep the nonsmooth term inside the projection (or proximal mapping)
in the accelerated gradient method and then to
apply a few subgradient descent iterations to solve the projection
subproblem. 
Inspired by \cite{lan2015gradient}, we suggest to keep the 
smooth term $h$ that has a larger Lipschitz constant in the proximal mapping
in the accelerated gradient method, and then to apply a few accelerated
gradient iterations to solve this smooth subproblem.
As a consequence, the proposed AGS method
involves two nested loops (i.e., outer and inner iterations), each of which consists
of a set of modified accelerated gradient descent iterations (see Algorithm \ref{algAGS}). 
At the $k$-th outer iteration, we first build a linear approximation $g_k(u) = l_f(\xl_k,u)$ of $f$ at the search point  $\xl_k \in X$
and then call the ProxAG procedure in \eqref{eqAGSoutput} to compute a new pair of search points
$(x_k, \tx_k) \in X \times X$. The ProxAG procedure can be viewed as a subroutine
to compute a pair of approximate solutions to
\begin{align} \label{inner_problem}
\min_{u\in X}g_k(u)+ h(u) + \beta V(x_{k-1},u),
\end{align}
where $g_k(\cdot)$ is defined in \eqref{eqgk}, and $x_{k-1}$ is called the prox-center at the $k$-th outer iteration.
It is worth mentioning that there are two essential differences associated with the steps \eqref{eqxl}-\eqref{equl}
from the standard Nesterov's accelerated gradient iterations.
Firstly, we use two different search points, i.e., $x_k$ and $\xu_k$, respectively, to update
$\xl_k$ to compute the linear approximation and $\xu_k$
to compute the output solution in \eqref{eqxu}.
Secondly, we employ two parameters, i.e., $\gamma_k$ and $\lambda_k$,
to update $\xl_k$ and $\xu_k$, respectively, rather than just one single parameter. 

The ProxAG procedure in Algorithm~\ref{algAGS} performs $T_k$ inner accelerated gradient iterations to solve \eqref{inner_problem}
with certain properly chosen starting points $\tu_0$ and $u_0$.  It should be noted, however, that
the accelerated gradient iterations in \eqref{equl}-\eqref{eqtu}  also differ from the standard Nesterov's accelerated 
gradient iterations in the sense that the definition of the search point  $\ul_t$ involves a fixed search point $\xu$.
Since each inner iteration of the ProxAG procedure requires one evaluation of $\nabla h$ and
no evaluation of $\nabla f$, the number of gradient evaluations of $\nabla h$ will be greater 
than that of $\nabla f$ as long as $T_k > 1$.
On the other hand, if $\lambda_k\equiv \gamma_k$ and  $T_k\equiv1$  in the AGS method, 
and $\alpha_t\equiv 1$, and $p_t\equiv q_t\equiv 0$ in the ProxAG procedure, then \eqref{eqAGSoutput} becomes
\begin{align}
x_k=\tx_k = \argmin_{u\in X}g_k(u) + l_h(\xl_k,u) + \beta_k V(x_{k-1}, u).
\end{align}
In this case, the AGS method reduces to a variant of 
Nesterov's optimal gradient method (see, e.g., \cite{nesterov2004introductory,tseng2008accelerated}).

\begin{algorithm}
	\caption{\label{algAGS}Accelerated gradient sliding (AGS) algorithm for solving \protect\eqref{eqSmoothProblem}}
	\begin{algorithmic}
		\State Choose $x_0\in X$. Set $\xu_0=x_0$.
		\For {$k=1,\ldots,N$}
		\begin{align}
		\label{eqxl}
		\xl_k=& (1-\gamma_k)\xu_{k-1} + \gamma_k x_{k-1},\\
		g_k(\cdot)=&l_f(\xl_k,\cdot), \label{eqgk}\\
		\label{eqAGSoutput}
		(x_k, \tx_k) = & ProxAG (g_k, \xu\kp, x\kp, \lambda_k, \beta_k, T_k)
		\\
		\label{eqxu}
		\xu_k = & (1-\lambda_k)\xu_{k-1} + \lambda_k \tx_k.
		\end{align}
		\EndFor
		\State Output $\xu_N$.
		
		\vgap
		
		\Procedure{$(x^+, \tx^+) = ProxAG$}{$g, \xu, x, \lambda, \beta, \gamma, T$}
		\State Set $\tu_0 = \xu$ and $u_0 = x$.
		\For {$t=1,\ldots, T$}
		\begin{align}
		\label{equl}
		\ul_t & = (1-\lambda)\xu + \lambda(1-\alpha_t)\tu\tp + \lambda\alpha_tu\tp,
		\\
		\label{equt}
		u_t & = \argmin_{u\in X}g(u) + l_h(\ul_t,u) + {\beta}V(x,u) + (\beta p_t+q_t) V(u\tp, u),
		\\
		\label{eqtu}
		\tu_t & = (1-\alpha_t)\tu\tp + \alpha_tu_t,
		\end{align}
		\EndFor
		\State Output $x^+=u_T$ and $\tx^+ = \tu_T$.
		\EndProcedure
	\end{algorithmic}
\end{algorithm}

Our goal in the remaining part of this section is to
establish the convergence of the AGS method and to
provide theoretical guidance to specify quite a few parameters,
including $\{\gamma_k\}$,
$\{\beta_k\}$, $\{T_k\}$, $\{\lambda_k\}$, $\{\alpha_t\}$, $\{p_t\}$, and $\{q_t\}$, used
in the generic statement of this algorithm.
In particular, we will provide upper bounds on the number of
outer and inner iterations, corresponding to 
 the number of gradient evaluations of $\nabla f$ and $\nabla h$, respectively,
 performed by the AGS method to 
find an $\varepsilon$-solution to \eqref{eqSmoothProblem}. 

We will first  study the convergence properties of the ProxAG procedure from which
the convergence of the AGS method immediately follows.
In our analysis, we measure the quality of the output solution computed at the $k$-th call to the ProxAG procedure by 
\begin{align}
\label{eqQk}
Q_k(x,u):=&g_k(x) - g_k(u) + h(x) - h(u).
\end{align}
Indeed, if $x^*$ is an optimal solution to \eqref{eqSmoothProblem}, then $Q_k(x,x^*)$ provides
a linear approximation for the functional optimality gap $\phi(x) - \phi(x^*)=f(x)-f(x^*)+h(x)-h(x^*)$ 
obtained by replacing $f$ with $g_k$. The following result describes some relationship between $\phi(x)$ and $Q_k(\cdot,\cdot)$.

\vgap
\begin{lemma}
	\label{lemPsiQ}
	For any $u\in X$, we have
	\begin{align}
	\label{eqPsiQ}
\begin{aligned}
	& \phi(\xu_k) - \phi(u) 
	\\
	\le & (1-\gamma_k)[\phi(\xu\kp) - \phi(u)] + Q_k(\xu_k, u) - (1-\gamma_k)Q_k(\xu\kp, u) 
	\\
	& + \frac{L}{2}\|\xu_k - \xl_k\|^2.
\end{aligned}	\end{align}
\end{lemma}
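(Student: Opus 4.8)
The plan is to bound the optimality gap $\phi(\xu_k)-\phi(u)$ by unrolling the definitions of $\xu_k$ and $\xl_k$ and exploiting the convexity and Lipschitz-smoothness assumptions \eqref{eqL} and \eqref{eqM}, together with the convexity of the linear approximation $g_k = l_f(\xl_k,\cdot)$. First I would write $\phi(\xu_k) = f(\xu_k) + h(\xu_k)$ and use the smoothness bound \eqref{eqL} applied at the point $\xl_k$ to get $f(\xu_k) \le l_f(\xl_k,\xu_k) + \frac{L}{2}\|\xu_k-\xl_k\|^2 = g_k(\xu_k) + \frac{L}{2}\|\xu_k-\xl_k\|^2$. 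This already isolates the last term in \eqref{eqPsiQ}, so it remains to show
\[
g_k(\xu_k) + h(\xu_k) - \phi(u) \le (1-\gamma_k)[\phi(\xu\kp)-\phi(u)] + Q_k(\xu_k,u) - (1-\gamma_k)Q_k(\xu\kp,u).
\]

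Next I would substitute the definition \eqref{eqQk} of $Q_k$. Writing out $Q_k(\xu_k,u) = g_k(\xu_k) - g_k(u) + h(\xu_k) - h(u)$ and $Q_k(\xu\kp,u) = g_k(\xu\kp) - g_k(u) + h(\xu\kp) - h(u)$, the terms $g_k(\xu_k)$ and $h(\xu_k)$ on the left cancel against those appearing in $Q_k(\xu_k,u)$, and after collecting the $\phi(u)$ and $g_k(u)+h(u)$ contributions the desired inequality reduces to
\[
(1-\gamma_k)\bigl[g_k(\xu\kp) + h(\xu\kp)\bigr] \le (1-\gamma_k)\phi(\xu\kp),
\]
i.e., to $g_k(\xu\kp) \le f(\xu\kp)$ (assuming $\gamma_k \le 1$). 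But $g_k(\xu\kp) = l_f(\xl_k,\xu\kp) = f(\xl_k) + \langle \nabla f(\xl_k), \xu\kp - \xl_k\rangle \le f(\xu\kp)$ is exactly the lower bound in \eqref{eqL} (the left inequality $0 \le f(x) - l_f(u,x)$, which expresses convexity of $f$). So the whole statement follows by straightforward algebra once these two applications of \eqref{eqL} are in place.

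The only genuinely substantive point — and the step I would be most careful about — is verifying that the convex combination structure is used consistently: the bound $f(\xu_k) \le g_k(\xu_k) + \frac{L}{2}\|\xu_k - \xl_k\|^2$ requires \emph{both} endpoints $\xu_k,\xl_k \in X$, which holds since $\xl_k$ is a convex combination of $\xu\kp, x\kp \in X$ by \eqref{eqxl} and $\xu_k$ is a convex combination of $\xu\kp, \tx_k \in X$ by \eqref{eqxu} (using that $\gamma_k, \lambda_k \in [0,1]$, which is part of the parameter specification to come). Notably, the lemma as stated does \emph{not} require any property of the ProxAG output $(x_k,\tx_k)$ beyond $\tx_k \in X$ — the inequality is purely a consequence of the outer-loop update rules and the structure of $\phi$ — so there is no circularity with the later convergence analysis of ProxAG. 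I would present the proof as a short chain of displayed inequalities, starting from $\phi(\xu_k) \le g_k(\xu_k) + h(\xu_k) + \frac{L}{2}\|\xu_k-\xl_k\|^2$, then adding and subtracting $(1-\gamma_k)[g_k(\xu\kp) + h(\xu\kp)]$ and $\phi(u)$, and finally invoking $g_k(\xu\kp) \le f(\xu\kp)$ to replace $g_k(\xu\kp)+h(\xu\kp)$ by $\phi(\xu\kp)$ inside the $(1-\gamma_k)$ factor.
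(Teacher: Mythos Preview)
Your approach is essentially the same as the paper's: apply the upper bound in \eqref{eqL} at $\xl_k$ to get $f(\xu_k)\le g_k(\xu_k)+\tfrac{L}{2}\|\xu_k-\xl_k\|^2$, then use convexity of $f$ (the lower bound in \eqref{eqL}) to replace the remaining $f$-values by $g_k$-values, and finally recognize the expression as $Q_k(\xu_k,u)-(1-\gamma_k)Q_k(\xu\kp,u)$.

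There is, however, a small algebraic slip in your reduction. After cancelling $g_k(\xu_k)+h(\xu_k)$ and collecting the $u$-terms, the inequality does \emph{not} reduce solely to $(1-\gamma_k)[g_k(\xu\kp)+h(\xu\kp)]\le(1-\gamma_k)\phi(\xu\kp)$; there is a leftover term $\gamma_k[g_k(u)-f(u)]$ on the left-hand side. Concretely, the reduction is
\[
\gamma_k\bigl[g_k(u)-f(u)\bigr]\ \le\ (1-\gamma_k)\bigl[f(\xu\kp)-g_k(\xu\kp)\bigr],
\]
so you need convexity of $f$ at \emph{two} points, $u$ and $\xu\kp$, not just at $\xu\kp$. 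The paper's proof makes both applications explicit, bounding $-\gamma_k f(u)\le-\gamma_k g_k(u)$ as well as $-(1-\gamma_k)f(\xu\kp)\le-(1-\gamma_k)g_k(\xu\kp)$. With that correction your argument goes through unchanged.
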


	\begin{proof}
		By \eqref{eqL}, \eqref{eqSmoothProblem}, \eqref{eqgk}, and the convexity of $f(\cdot)$, we have
		\begin{align}
		& \phi(\xu_k) - (1-\gamma_k)\phi(\xu\kp) - \gamma_k\phi(u)
		\\
		\le & g_k(\xu_k) + \frac{L}{2}\|\xu_k - \xl_k\|^2  + h(\xu_k) 
		\\
		& - (1-\gamma_k)f(\xu\kp) - (1-\gamma_k)h(\xu\kp) - \gamma_kf(u) -  \gamma_kh(u)
		\\
		\le & g_k(\xu_k) + \frac{L}{2}\|\xu_k - \xl_k\|^2  + h(\xu_k) 
		\\
		& - (1-\gamma_k)g_k(\xu\kp) - (1-\gamma_k)h(\xu\kp) - \gamma_kg_k(u) -  \gamma_kh(u)
		\\
		= & Q_k(\xu_k, u) - (1-\gamma_k)Q_k(\xu\kp, u) + \frac{L}{2}\|\xu_k - \xl_k\|^2.
		\end{align}
	\qed\end{proof}
	
\vgap

In order to analyze the ProxAG procedure, we need the following two technical results. The first one below characterizes the solution of optimization problems 
involving prox-functions. The proof of this result can be found, for example, in Lemma 2 of \cite{ghadimi2012optimal}.

\vgap 

\begin{lemma}
	\label{lemProx}
	Suppose that a convex set $Z\subseteq\R^n$, a convex function $q:Z\to\R$, points $z,z'\in Z$ and scalars $\mu_1,\mu_2\in\R_+$ are given.
	Also let $V(z,u)$ be a prox-function. If
	\begin{align}
	u^*\in\Argmin_{u\in Z} q(u) + \mu_1V(z, u) + \mu_2V(z', u),
	\end{align}
	then for any $u\in Z$, we have
	\begin{align}
	& q(u^*) + \mu_1V(z, u^*) + \mu_2V(z', u^*) 
	\\
	\le & q(u) + \mu_1V(z, u) + \mu_2V(z', u) - (\mu_1+\mu_2)V(u^*,u).
	\end{align}
\end{lemma}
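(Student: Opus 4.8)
The plan is to prove Lemma~\ref{lemProx} directly from the first-order optimality condition for the minimization problem defining $u^*$, together with the three-point identity satisfied by Bregman divergences. First I would invoke optimality of $u^*$: since $u^*$ minimizes the convex function $u\mapsto q(u)+\mu_1 V(z,u)+\mu_2 V(z',u)$ over the convex set $Z$, there is a subgradient $q'\in\partial q(u^*)$ such that
\begin{align}
\langle q' + \mu_1\nabla_u V(z,u^*) + \mu_2\nabla_u V(z',u^*),\, u-u^*\rangle \ge 0,\quad\forall u\in Z,
\end{align}
where $\nabla_u V(z,\cdot)$ denotes the gradient in the second argument; by \eqref{eqV} one has $\nabla_u V(z,u^*) = \nabla\pi(u^*)-\nabla\pi(z)$.

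The key algebraic ingredient is the three-point identity for the prox-function: for any $z,u^*,u\in Z$,
\begin{align}
\langle \nabla_u V(z,u^*),\, u-u^*\rangle = V(z,u) - V(z,u^*) - V(u^*,u).
\end{align}
This follows by expanding all three terms via \eqref{eqV} and cancelling the $\pi$-terms. I would apply this identity twice, once with base point $z$ and once with base point $z'$, and multiply by $\mu_1$ and $\mu_2$ respectively. Combining with the optimality inequality above gives
\begin{align}
\langle q',\, u - u^*\rangle + \mu_1\big(V(z,u) - V(z,u^*) - V(u^*,u)\big) + \mu_2\big(V(z',u) - V(z',u^*) - V(u^*,u)\big) \ge 0.
\end{align}
Rearranging, and using convexity of $q$ in the form $q(u) - q(u^*) \ge \langle q', u-u^*\rangle$, yields exactly the claimed inequality
\begin{align}
q(u^*) + \mu_1 V(z,u^*) + \mu_2 V(z',u^*) \le q(u) + \mu_1 V(z,u) + \mu_2 V(z',u) - (\mu_1+\mu_2)V(u^*,u).
\end{align}

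There is no real obstacle here; the result is standard (it is the cited Lemma~2 of \cite{ghadimi2012optimal}). The only point requiring a little care is the handling of the subgradient of $q$ on a constrained set when $q$ is merely convex rather than differentiable: one should state the optimality condition in the variational-inequality form above, valid because $V(z,\cdot)$ and $V(z',\cdot)$ are differentiable on $Z$, so that the normal-cone contribution is absorbed into the single inequality over all $u\in Z$. Everything else is the two-fold application of the three-point identity and a rearrangement, so I would simply cite \cite{ghadimi2012optimal} and, if a self-contained argument is wanted, present the short derivation just sketched.
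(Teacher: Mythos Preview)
Your argument is correct and is exactly the standard proof via first-order optimality plus the Bregman three-point identity. Note that the paper does not actually prove Lemma~\ref{lemProx}; it simply defers to Lemma~2 of \cite{ghadimi2012optimal}, whose proof is precisely the derivation you sketched. So your proposal matches the intended approach.
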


\vgap 

The second technical result slightly generalizes Lemma 3 of \cite{lan2013bundle} to provide a convenient way to study sequences with sublinear rates of convergence.

\vgap

\begin{lemma}
	\label{lemSum}
	Let $c_k\in (0,1]$, $k=1,2,\ldots$ and $C_1>0$ be given, and define 
	\begin{align}
	\label{eqW}
	C_k:=(1-c_k)C\kp,\ k\ge 2.
	\end{align}
	Suppose that $C_k>0$ for all $k\ge 2$ and that the sequence $\{\delta_k\}_{k\ge 0}$ satisfies
	\begin{align}
	\label{eqDeltaCond}
	\delta_k\le (1-c_k) \delta\kp + B_k,\ k=1,2,\ldots.
	\end{align}
	For any $k\ge 1$, we have
	\begin{align}
	\label{eqDeltaBound}
	\delta_k\le C_k\left[\frac{1-c_1}{C_1}\delta_0 + \sum_{i=1}^k\frac{B_i}{C_i} \right].
	\end{align}
	In particular, the above inequality becomes equality when the relations in \eqref{eqDeltaCond} are all equality relations.
\end{lemma}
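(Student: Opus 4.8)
The plan is to prove Lemma~\ref{lemSum} by induction on $k$, since the recursion \eqref{eqDeltaCond} together with the definition \eqref{eqW} of $C_k$ is perfectly suited to a telescoping argument once we divide through by $C_k$. First I would observe that the quantity $\delta_k / C_k$ is the natural object to track: dividing \eqref{eqDeltaCond} by $C_k$ and using $C_k = (1-c_k)C_{k-1}$ for $k \ge 2$ gives
\begin{align}
\frac{\delta_k}{C_k} \le \frac{(1-c_k)\delta_{k-1}}{C_k} + \frac{B_k}{C_k} = \frac{\delta_{k-1}}{C_{k-1}} + \frac{B_k}{C_k},\ k \ge 2,
\end{align}
which is an immediate telescoping relation. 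Summing this from $k=2$ up to the index of interest would then collapse the left side down to $\delta_1/C_1$ plus the sum of the $B_i/C_i$ terms.

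The one wrinkle is the base case $k=1$, where \eqref{eqW} does not define $C_1$ via the recursion; instead $C_1$ is given and \eqref{eqDeltaCond} reads $\delta_1 \le (1-c_1)\delta_0 + B_1$. Dividing by $C_1$ gives $\delta_1/C_1 \le (1-c_1)\delta_0/C_1 + B_1/C_1$, which supplies exactly the $\frac{1-c_1}{C_1}\delta_0$ term appearing in \eqref{eqDeltaBound}. So I would either (i) handle $k=1$ separately as the base case and then run the telescoping sum for $k\ge 2$, or (ii) set up a clean induction hypothesis that \eqref{eqDeltaBound} holds at stage $k-1$ and derive it at stage $k$ using $\delta_k \le (1-c_k)\delta_{k-1} + B_k$, multiplying the hypothesis by $(1-c_k)$ and noting $(1-c_k)C_{k-1} = C_k$. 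Option (ii) is cleanest: assuming $\delta_{k-1} \le C_{k-1}\big[\frac{1-c_1}{C_1}\delta_0 + \sum_{i=1}^{k-1} \frac{B_i}{C_i}\big]$, plug into \eqref{eqDeltaCond} to get $\delta_k \le (1-c_k)C_{k-1}\big[\frac{1-c_1}{C_1}\delta_0 + \sum_{i=1}^{k-1}\frac{B_i}{C_i}\big] + B_k = C_k\big[\frac{1-c_1}{C_1}\delta_0 + \sum_{i=1}^{k-1}\frac{B_i}{C_i}\big] + C_k \cdot \frac{B_k}{C_k}$, which is exactly \eqref{eqDeltaBound} at stage $k$.

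For the final sentence of the statement — that \eqref{eqDeltaBound} becomes an equality when all the relations in \eqref{eqDeltaCond} are equalities — I would simply note that every inequality step above (the base case division and the inductive substitution) preserves equality: if $\delta_j = (1-c_j)\delta_{j-1} + B_j$ for all $j$, then the induction goes through verbatim with $\le$ replaced by $=$, using that $C_k > 0$ (which is assumed) so that dividing and multiplying by $C_k$ is reversible. I do not anticipate a genuine obstacle here; the only thing to be careful about is bookkeeping the index shift in the base case and making sure the hypothesis $C_k > 0$ for all $k \ge 2$ is invoked so that all the divisions are legitimate (in particular $C_i > 0$ for every $i$ appearing in the sum, which also requires $C_1 > 0$, given). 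This is essentially a restatement of Lemma~3 of \cite{lan2013bundle} with the harmless generalization that $c_1$ need not differ structurally from the later $c_k$, so the proof is short.
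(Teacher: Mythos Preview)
Your proposal is correct and follows essentially the same approach as the paper: divide \eqref{eqDeltaCond} by $C_k$ (using $C_k=(1-c_k)C_{k-1}$ for $k\ge 2$) and telescope/sum, handling the $k=1$ case to produce the $\frac{1-c_1}{C_1}\delta_0$ term. The paper's proof is a one-line version of exactly this argument, and your remark that equalities propagate when \eqref{eqDeltaCond} holds with equality is precisely what the paper asserts.
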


	\begin{proof}
		The result follows from dividing both sides of \eqref{eqDeltaCond} by $C_k$ and then summing up the resulting inequalities or equalities.
	\qed\end{proof}

\vgap

It should be noted that, although \eqref{eqDeltaCond} and \eqref{eqDeltaBound} are stated in the form of inequalities,
we can derive some useful formulas by setting them to be equalities. 
For example, let $\{\alpha_t\}$ be the parameters used in the ProxAG procedure (see \eqref{equl} and \eqref{eqtu}) and 
consider the sequence $\{\Lambda_t\}_{t\ge 1}$ defined by
\begin{align}
\label{eqLambda}
\Lambda_t = \begin{cases}
1 & t=1,
\\
(1-\alpha_t)\Lambda\tp & t>1.
\end{cases}
\end{align}
By Lemma \ref{lemSum} (with $k=t$, $C_k=\Lambda_t$, $c_k=\alpha_t$, $\delta_k\equiv 1$, and $B_k=\alpha_t$), we have
\begin{align}
\label{eqLambdaConv}
1 = \Lambda_t\left[\frac{1-\alpha_1}{\Lambda_1} + \sumt\frac{\alpha_i}{\Lambda_i}\right] = \Lambda_t(1-\alpha_1) + \Lambda_t\sumt\frac{\alpha_i}{\Lambda_i},
\end{align}
where the last identity follows from the fact that $\Lambda_1=1$ in \eqref{eqLambda}.
Similarly, applying Lemma \ref{lemSum} to the recursion $\tu_t  = (1-\alpha_t)\tu\tp + \alpha_tu_t$ in
\eqref{eqtu} (with $k=t$, $C_k=\Lambda_t$, $c_k=\alpha_t$, $\delta_k=\tu_t$, and $B_k=\alpha_tu_t$), we have
\begin{align}
\label{eqtuConv}
\tu_t = \Lambda_t\left[(1-\alpha_1)\tu_0 + \sumt\frac{\alpha_i}{\Lambda_i}u_i\right].
\end{align}
In view of \eqref{eqLambdaConv} and the fact that $\tu_0 = \xu$ in the description of the ProxAG procedure, 
the above relation indicates that $\tu_t$ is a convex combination of $\xu$ and $\{u_i\}_{i=1}^t$.

\vgap

With the help of the above two technical results, we are now ready to derive 
some important convergence properties for the ProxAG procedure in terms of
the error measure $Q_k(\cdot,\cdot)$. For the sake of notational convenience,
when we work on the $k$-th call to the ProxAG procedure, we drop the subscript $k$ in \eqref{eqQk}
and just denote 
\begin{align}
	\label{eqQ}
	Q(x,u):=g(x) - g(u) + h(x) - h(x). 
\end{align}
In a similar vein, we also
define 
\begin{align}
\label{eqxlxu}
\xl:=(1-\gamma)\xu + \gamma x \text{ and }\xu^+:=(1-\lambda)\xu + \lambda \tx^+.
\end{align}
Comparing the above notations with \eqref{eqxl} and \eqref{eqxu}, we can observe 
that $\xl$ and $\xu^+$, respectively, represent $\xl_k$ and $\xu_k$ in the $k$-th call to the ProxAG procedure. 

\vgap

\begin{lemma}
	\label{lemuOC}
	Consider the $k$-th call to the ProxAG procedure in Algorithm \ref{algAGS} and let
	$\Lambda_t$ and $\xu^+$ be defined in \eqref{eqLambda} and \eqref{eqxlxu} respectively. 
	If the parameters satisfy
	\begin{align}
	\label{eqGScond}
	\lambda\le 1, \Lambda_T(1-\alpha_1) = 1 - \frac{\gamma}{\lambda},\text{ and } \beta p_t + q_t \ge \frac{\lambda M\alpha_t}{\nu},
	\end{align}
	then
	\begin{align}
	\label{eqQBound}
	\begin{aligned}
	& Q(\xu^+, u) - (1-\gamma)Q(\xu, u) \le \Lambda_T\sum_{t=1}^T\frac{\Upsilon_t(u)}{\Lambda_t},\ \forall u\in X,
	\end{aligned}
	\end{align}
	where 
	\begin{align}
	\label{eqUpsilon}
	\Upsilon_t(u):= & {\lambda\beta\alpha_t}[V(x,u) - V(x,u_t) + p_tV(u\tp[t], u) - (1+p_t)V(u_t,u)]
	\\
	& + {\lambda\alpha_tq_t}[V(u\tp[t], u) - V(u_t,u)].
	\end{align}
	\end{lemma}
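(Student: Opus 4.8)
\textbf{Proof plan for Lemma~\ref{lemuOC}.}
The idea is to run an accelerated-gradient estimate on the inner loop, but phrased in terms of an auxiliary ``averaged'' sequence chosen so that the error terms telescope, and then to feed the resulting one-step recursion into Lemma~\ref{lemSum} with $C_t=\Lambda_t$. Recall throughout that $g=g_k$ is affine and that $h$ is convex and satisfies \eqref{eqM}. For $t=0,1,\dots,T$ set $z_t:=(1-\lambda)\xu+\lambda\tu_t$ and $w_t:=(1-\lambda)\xu+\lambda u_t$. From $\tu_0=\xu$ and $\tx^+=\tu_T$ we get $z_0=\xu$ and, by \eqref{eqxlxu}, $z_T=\xu^+$; and using $\tu_t=(1-\alpha_t)\tu_{t-1}+\alpha_t u_t$ a direct computation gives $z_t=(1-\alpha_t)z_{t-1}+\alpha_t w_t$ together with $\ul_t=(1-\alpha_t)z_{t-1}+\alpha_t w_{t-1}$ (this second identity is precisely what the definition of $\ul_t$ in \eqref{equl} is built to produce), so that
\[
z_t-\ul_t=\alpha_t(w_t-w_{t-1})=\lambda\alpha_t(u_t-u_{t-1}).
\]

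\emph{One inner step.} Applying \eqref{eqM} at $\ul_t$ to the point $z_t$, then expanding $l_h(\ul_t,\cdot)$ and $g$ along the convex-combination identities above (legitimate since $\alpha_t,\lambda\in[0,1]$, so $w_t,z_t$ are convex combinations of points of $X$) and using $l_h(\ul_t,\cdot)\le h(\cdot)$, I expect to obtain
\[
g(z_t)+h(z_t)\le(1-\alpha_t)\big[g(z_{t-1})+h(z_{t-1})\big]+\alpha_t(1-\lambda)\big[g(\xu)+h(\xu)\big]+\alpha_t\lambda\big[g(u_t)+l_h(\ul_t,u_t)\big]+\tfrac{M\lambda^2\alpha_t^2}{2}\|u_t-u_{t-1}\|^2 .
\]
Next, $u_t$ minimizes in \eqref{equt} the convex function $g(\cdot)+l_h(\ul_t,\cdot)+\beta V(x,\cdot)+(\beta p_t+q_t)V(u_{t-1},\cdot)$, so Lemma~\ref{lemProx} with $\mu_1=\beta$, $\mu_2=\beta p_t+q_t$, followed once more by $l_h(\ul_t,u)\le h(u)$, bounds $g(u_t)+l_h(\ul_t,u_t)$ above, for every $u\in X$, by $g(u)+h(u)+\beta[V(x,u)-V(x,u_t)-V(u_t,u)]+(\beta p_t+q_t)[V(u_{t-1},u)-V(u_{t-1},u_t)-V(u_t,u)]$.

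\emph{Telescoping and conclusion.} Substituting the last bound into the previous display and subtracting the $t$-independent constant $G:=(1-\lambda)[g(\xu)+h(\xu)]+\lambda[g(u)+h(u)]$ produces a recursion $\hat\delta_t\le(1-\alpha_t)\hat\delta_{t-1}+B_t$, where $\hat\delta_t:=g(z_t)+h(z_t)-G$ and $B_t$ gathers the prox terms together with $\tfrac{M\lambda^2\alpha_t^2}{2}\|u_t-u_{t-1}\|^2$. Regrouping the definition \eqref{eqUpsilon} of $\Upsilon_t(u)$ one checks that $\Upsilon_t(u)-B_t=\lambda\alpha_t(\beta p_t+q_t)V(u_{t-1},u_t)-\tfrac{M\lambda^2\alpha_t^2}{2}\|u_t-u_{t-1}\|^2$, which is nonnegative by \eqref{eqVnorm} precisely under the hypothesis $\beta p_t+q_t\ge\lambda M\alpha_t/\nu$; hence $B_t\le\Upsilon_t(u)$ and $\hat\delta_t\le(1-\alpha_t)\hat\delta_{t-1}+\Upsilon_t(u)$. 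Lemma~\ref{lemSum} with $c_t=\alpha_t$ and $C_t=\Lambda_t$ (so $C_1=\Lambda_1=1$) then gives $\hat\delta_T\le\Lambda_T(1-\alpha_1)\hat\delta_0+\Lambda_T\sum_{t=1}^T\Upsilon_t(u)/\Lambda_t$. It remains to translate the endpoints: since $z_0=\xu$ and $z_T=\xu^+$, one computes directly $\hat\delta_0=\lambda Q(\xu,u)$ and $\hat\delta_T=Q(\xu^+,u)-(1-\lambda)Q(\xu,u)$; plugging these in, collecting the $Q(\xu,u)$ terms, and using $\lambda\Lambda_T(1-\alpha_1)=\lambda-\gamma$ (from the hypothesis $\Lambda_T(1-\alpha_1)=1-\gamma/\lambda$) makes the coefficient of $Q(\xu,u)$ equal to $(1-\lambda)+(\lambda-\gamma)=1-\gamma$, which is exactly \eqref{eqQBound}.

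\emph{Main obstacle.} The smoothness/convexity bookkeeping of the ``one inner step'' and the telescoping via Lemma~\ref{lemSum} are routine. The content of the argument is the choice of the auxiliary sequence: one must track $z_t=(1-\lambda)\xu+\lambda\tu_t$ rather than $\tu_t$ itself, since only for $z_t$ do $z_0=\xu$, $z_T=\xu^+$, and $z_t-\ul_t=\lambda\alpha_t(u_t-u_{t-1})$ hold simultaneously --- working with $\tu_t$ directly leaves an uncontrollable leftover $(1-\lambda)(\tu_t-\xu)$ inside $\tu_t-\ul_t$, which is the whole reason the fixed point $\xu$ appears in \eqref{equl}. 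The remaining care is purely in the constants: matching $B_t$ to $\Upsilon_t(u)$ so that the condition $\beta p_t+q_t\ge\lambda M\alpha_t/\nu$ is exactly what is needed (via \eqref{eqVnorm}), and checking that $\lambda\le1$ and $\Lambda_T(1-\alpha_1)=1-\gamma/\lambda$ are precisely what gets used --- the former to legitimize the convex combinations, the latter to collapse the final coefficient to $1-\gamma$.
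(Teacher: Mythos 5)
Your proposal is correct and follows essentially the same route as the paper's proof: the same auxiliary sequence $z_t=(1-\lambda)\xu+\lambda\tu_t$, the same key identity $z_t-\ul_t=\lambda\alpha_t(u_t-u_{t-1})$, the same application of Lemma~\ref{lemProx} to \eqref{equt} and of Lemma~\ref{lemSum} for the telescoping, with the condition $\beta p_t+q_t\ge\lambda M\alpha_t/\nu$ entering exactly where you place it. The only (cosmetic) difference is that you measure progress against the constant $G=(1-\lambda)[g(\xu)+h(\xu)]+\lambda[g(u)+h(u)]$ rather than against $Q(\cdot,v)$ with $v=(1-\lambda)\xu+\lambda u$, which folds the paper's separate first step (relating $Q(\cdot,u)$-gaps to $Q(\cdot,v)$-gaps via the convexity of $h$ at $v$) into your exact endpoint translation.
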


\begin{proof}
		Let us fix any arbitrary $u\in X$ and denote
		\begin{align}
		\label{eqv}
		v := (1-\lambda)\xu + \lambda u, \text{ and } \uu_t:=(1-\lambda)\xu + \lambda \tu_t.
		\end{align} 
		Our proof consists of two major parts. We first prove that
		\begin{align}
			\label{eqPart1}
			Q(\xu^+, u) - (1-\gamma)Q(\xu, u) \le Q(\uu_T, v) - \left(1 - \frac{\lambda}{\gamma}\right)Q(\uu_0, v),
		\end{align}
		and then estimate the right-hand-side of \eqref{eqPart1} through the following recurrence property:
		\begin{align}
			\label{eqPart2}
			Q(\uu_t, v) - (1-\alpha_t)Q(\uu\tp, v)\le \Upsilon_t(u).
		\end{align}
		The result in \eqref{eqQBound} then follows as an immediate consequence of \eqref{eqPart1} and \eqref{eqPart2}. 
		Indeed, by Lemma~\ref{lemSum} applied to \eqref{eqPart2} (with $k=t$, $C_k=\Lambda_t$, $c_k=\alpha_t$, $\delta_k=Q(\uu_t,v)$, and $B_k=\Upsilon_t(u)$), 
		we have
		\begin{align}
			Q(\uu_T, v) \le &\Lambda_T\left[\frac{1-\alpha_1}{\Lambda_1}Q(\uu_0,v) - \sum_{t=1}^T\frac{\Upsilon_t(u)}{\Lambda_t}\right] 
			\\
			= & \left(1 - \frac{\lambda}{\gamma}\right)Q(\uu_0,v) - \Lambda_T\sum_{t=1}^T\frac{\Upsilon_t(u)}{\Lambda_t},
		\end{align}
		where last inequality follows from \eqref{eqGScond} and the fact that $\Lambda_1=1$ in \eqref{eqLambda}. 
		The above relation together with \eqref{eqPart1} then clearly imply \eqref{eqQBound}.
		
		We start with the first part of the proof regarding \eqref{eqPart1}. By \eqref{eqQ} and the linearity of $g(\cdot)$, we have
		\begin{align}
		\label{eqtmpQxpx}
		\begin{aligned}
			& Q(\xu^+, u) - (1-\gamma)Q(\xu, u)
			\\
			= & g(\xu^+ - (1-\gamma)\xu - \gamma u) + h(\xu^+) - (1-\gamma)h(\xu) - \gamma h(u)
			\\
			= & g(\xu^+ - \xu + \gamma(\xu - u)) + h(\xu^+) - h(\xu) + \gamma(h(\xu) - h(u)).
		\end{aligned}
		\end{align}
		Now, noting that by the relation between $u$ and $v$ in \eqref{eqv}, we have
		\begin{align}
			\label{eqtmpxuu}
			\gamma(\xu - u) = \frac{\gamma}{\lambda}(\lambda\xu - \lambda u) = \frac{\gamma}{\lambda}(\xu-v).
		\end{align}
		In addition, by \eqref{eqv} and the convexity of $h(\cdot)$, we obtain
		\begin{align}
		\frac{\gamma}{\lambda}[h(v) - (1-\lambda)h(\xu) - \lambda h(u)]\le 0,
		\end{align}
		or equivalently,
		\begin{align}
			\label{eqtmphxuu}
		\gamma(h(\xu)-h(u))\le \frac{\gamma}{\lambda}(h(\xu)-h(v)).
		\end{align}
		Applying \eqref{eqtmpxuu} and \eqref{eqtmphxuu} to \eqref{eqtmpQxpx}, and using the definition of $Q(\cdot,\cdot)$ in \eqref{eqQ}, we obtain 
		\begin{align}
			& Q(\xu^+, u) - (1-\gamma)Q(\xu, u) \le Q(\xu^+, v) - \left(1 - \frac{\lambda}{\gamma}\right)Q(\xu, v).
		\end{align}
		Noting that $\tu_0=\xu$ and $\tx = \tu_T$ in the description of the ProxAG procedure, by \eqref{eqxlxu} and \eqref{eqv} we have $\xu^+ = \uu_T$ and $\uu_0=\xu$. Therefore, the above relation is equivalent to \eqref{eqPart1}, and we conclude the first part of the proof.
		
		For the second part of the proof regarding \eqref{eqPart2}, first observe that by the definition of $Q(\cdot,\cdot)$ in \eqref{eqQ}, the convexity of $h(\cdot)$, and \eqref{eqM},
		\begin{align}
		\label{tempRel}
		\begin{aligned}
		& Q(\uu_t, v) - (1-\alpha_t)Q(\uu\tp, v)
		\\
		= & \lambda\alpha_t(g(u_t) - g(u)) + h(\uu_t) - (1-\alpha_t)h(\uu\tp) - \alpha_th(v)
		\\
		\le & \lambda\alpha_t(g(u_t) - g(u)) + l_h(\ul_t, \uu_t) + \frac{M}{2}\|\uu_t - \ul_t\|^2 
		\\
		&- (1-\alpha_t)l_h(\ul_t,\uu\tp) - \alpha_tl_h(\ul_t,v)
		\\
		= & \lambda\alpha_t(g(u_t) - g(u)) + l_h(\ul_t, \uu_t-(1-\alpha_t)\uu\tp- \alpha_tv) + \frac{M}{2}\|\uu_t - \ul_t\|^2. 
		\end{aligned}
		\end{align}
		Also note that by \eqref{equl}, \eqref{eqtu}, and \eqref{eqv},
		\begin{align}
		& \uu_t - (1-\alpha_t)\uu\tp - \alpha_t v 
		= (\uu_t - \uu\tp) + \alpha_t(\uu\tp - v) 
		\\
		=&  \lambda(\tu_t - \tu\tp) + \lambda\alpha_t(\tu\tp - u)
		=  \lambda(\tu_t - (1-\alpha_t)\tu\tp) - \lambda\alpha_t u 
		\\
		=& \lambda\alpha_t(u_t - u).
		\end{align}
		By a similar argument as the above, we have
		\begin{align}
		\label{equublb}
		\uu_t - \ul_t = \lambda(\tu_t - (1-\alpha_t)\tu\tp) - \lambda\alpha_tu\tp = \lambda\alpha_t(u_t - u\tp).
		\end{align}
		Using the above two identities in \eqref{tempRel}, we have
		\begin{align}
		& Q(\uu_t, v) - (1-\alpha_t)Q(\uu\tp, v)
		\\
		\le & \lambda\alpha_t\left[g(u_t)-g(u)   + l_h(\ul_t, u_t) - l_h(\ul_t, u) + \frac{M\lambda\alpha_t}{2}\|u_t - u\tp\|^2\right].
		\end{align}
		Moreover, it follows from Lemma \ref{lemProx} applied to \eqref{equt} that
		\begin{align}
		\begin{aligned}
		& g(u_t)-g(u)   + l_h(\ul_t, u_t) - l_h(\ul_t, u)
		\\
		\le & \beta(V(x,u) - V(u_t, u) - V(x, u_t))
		\\
		&+ (\beta p_t+q_t)(V(u\tp, u) - V(u_t, u) - V(u\tp, u_t)).
		\end{aligned}
		\end{align}
		Also by \eqref{eqVnorm} and \eqref{eqGScond}, we have
		\begin{align}
			\frac{M\lambda\alpha_t}{2}\|u_t - u\tp\|^2 \le \frac{M\lambda\alpha_t}{2\nu}V(u\tp, u_t) \le (\beta p_t+q_t)V(u\tp, u_t).
		\end{align}
		Combining the above three relations, we conclude \eqref{eqPart2}.
	\qed\end{proof}

\vgap

In the following proposition, we provide certain sufficient conditions under which the  the right-hand-side of \eqref{eqQBound} 
can be properly bounded. As a consequence, we obtain a recurrence relation for the ProxAG procedure in terms of $Q(\xu_k,u)$.

\vgap

\begin{proposition}
	\label{proQBoundSpec}
	Consider the $k$-th call to the ProxAG procedure. If \eqref{eqGScond} holds,
	\begin{align}
	\label{eqAGSpar}
	\frac{\alpha_tq_t}{\Lambda_t} = \frac{\alpha\tn  q\tn}{\Lambda\tn} \ \text{ and } \ \frac{\alpha_t(1+p_t)}{\Lambda_t} = \frac{\alpha\tn  p\tn}{\Lambda\tn}
	\end{align}
	for any $1 \le t \le T-1$, then we have
	\begin{align}
	\label{eqQBoundSpec}
	\begin{aligned}
	& Q(\xu^+, u) - (1-\gamma)Q(\xu, u) 
	\\
	\le & \lambda\alpha_T[\beta(1+p_T)+q_T]\left[V(x,u) - V(x^+,u)\right] -  \frac{\nu\beta}{2\gamma}\|\xu^+ - \xl\|^2,
	\end{aligned}	
	\end{align}
	where $\xu^+$ and $\xl$ are defined in \eqref{eqxlxu}.
	\end{proposition}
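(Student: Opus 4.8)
The plan is to start from Lemma~\ref{lemuOC}, whose hypothesis \eqref{eqGScond} is among the present assumptions, so that \eqref{eqQBound} already gives
\[
Q(\xu^+,u) - (1-\gamma)Q(\xu,u) \le \Lambda_T\sum_{t=1}^T \frac{\Upsilon_t(u)}{\Lambda_t},
\]
and it remains only to bound the right-hand side above by the right-hand side of \eqref{eqQBoundSpec}. I would split $\Upsilon_t(u)$ from \eqref{eqUpsilon} into three pieces: the ``fixed-center'' term $\lambda\beta\alpha_t V(x,u)$; the ``descent'' term $-\lambda\beta\alpha_t V(x,u_t)$; and the ``sliding'' terms $\lambda\alpha_t(\beta p_t+q_t)V(u_{t-1},u) - \lambda\alpha_t[\beta(1+p_t)+q_t]V(u_t,u)$, and then analyze each of the three sums $\Lambda_T\sum_t(\cdot)/\Lambda_t$ separately.

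For the sliding terms, dividing by $\Lambda_t$ and summing, a shift of index combines them into a single telescoping sum whose interior coefficients of $V(u_s,u)$, $s=1,\dots,T-1$, are (up to the positive factors $\lambda\beta$ and $\lambda$) exactly $\frac{\alpha_{s+1}p_{s+1}}{\Lambda_{s+1}} - \frac{\alpha_s(1+p_s)}{\Lambda_s}$ and $\frac{\alpha_{s+1}q_{s+1}}{\Lambda_{s+1}} - \frac{\alpha_s q_s}{\Lambda_s}$; both vanish by \eqref{eqAGSpar}, leaving only the boundary terms at $s=0$ and $s=T$. Using $\Lambda_1=1$ and $u_0=x$ from the ProxAG procedure, the $s=0$ term combines with $\Lambda_T\sum_t\frac{\lambda\beta\alpha_t}{\Lambda_t}V(x,u) = \beta\gamma V(x,u)$ (here I use $\sum_t\alpha_t/\Lambda_t = \gamma/(\lambda\Lambda_T)$, which follows from \eqref{eqLambdaConv} together with $\Lambda_T(1-\alpha_1)=1-\gamma/\lambda$ in \eqref{eqGScond}) into a multiple of $V(x,u)$, while the $s=T$ term is $-\lambda\alpha_T[\beta(1+p_T)+q_T]V(u_T,u) = -\lambda\alpha_T[\beta(1+p_T)+q_T]V(x^+,u)$ since $x^+=u_T$. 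It then remains to verify that the accumulated coefficient of $V(x,u)$ equals $\lambda\alpha_T[\beta(1+p_T)+q_T]$: the $q$-part of this reduces to $\alpha_1 q_1/\Lambda_1 = \alpha_T q_T/\Lambda_T$, immediate from the constancy of $\alpha_t q_t/\Lambda_t$ in \eqref{eqAGSpar}, and the $\beta$-part reduces to $\alpha_T(1+p_T) = \Lambda_T\alpha_1 p_1 + \gamma/\lambda$, which I obtain by summing the rearranged identity $\frac{\alpha_{t+1}p_{t+1}}{\Lambda_{t+1}} - \frac{\alpha_t p_t}{\Lambda_t} = \frac{\alpha_t}{\Lambda_t}$ over $t=1,\dots,T-1$ and again invoking $\sum_t\alpha_t/\Lambda_t = \gamma/(\lambda\Lambda_T)$ and $\Lambda_1=1$.

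The remaining piece is the sum of the descent terms, $-\lambda\beta\Lambda_T\sum_t\frac{\alpha_t}{\Lambda_t}V(x,u_t)$, which must produce the quadratic term $-\frac{\nu\beta}{2\gamma}\|\xu^+-\xl\|^2$; this is where I expect the main work. Using \eqref{eqtuConv} with $\tu_0=\xu$, the relation $\Lambda_T(1-\alpha_1)=1-\gamma/\lambda$, and $\sum_t\alpha_t/\Lambda_t=\gamma/(\lambda\Lambda_T)$, I would show that the definitions in \eqref{eqxlxu} yield the identity $\xu^+ - \xl = \gamma\sum_{t=1}^T w_t(u_t - x)$ with $w_t := \lambda\Lambda_T\alpha_t/(\gamma\Lambda_t)\ge 0$ and $\sum_t w_t = 1$ (the $\xu$ contributions cancel exactly). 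Convexity of $\|\cdot\|^2$ then gives $\|\xu^+-\xl\|^2 \le \gamma^2\sum_t w_t\|u_t-x\|^2$, and \eqref{eqVnorm} gives $\|u_t-x\|^2\le\frac{2}{\nu}V(x,u_t)$, so that $\Lambda_T\sum_t\frac{\alpha_t}{\Lambda_t}V(x,u_t)\ge\frac{\nu}{2\gamma\lambda}\|\xu^+-\xl\|^2$ and hence $-\lambda\beta\Lambda_T\sum_t\frac{\alpha_t}{\Lambda_t}V(x,u_t)\le -\frac{\nu\beta}{2\gamma}\|\xu^+-\xl\|^2$. Adding the three contributions produces \eqref{eqQBoundSpec}. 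The main obstacles are this convex-combination representation of $\xu^+-\xl$ (which is what turns a sum of Bregman distances into a single squared norm) and the delicate coefficient bookkeeping in the telescoping step, both of which rest squarely on the parameter identities \eqref{eqGScond} and \eqref{eqAGSpar}.
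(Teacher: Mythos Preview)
Your proposal is correct and follows essentially the same approach as the paper's proof: start from Lemma~\ref{lemuOC}, split $\Upsilon_t$ into the fixed-center, descent, and sliding pieces, telescope the sliding piece via \eqref{eqAGSpar}, use \eqref{eqLambdaConv} with $\Lambda_T(1-\alpha_1)=1-\gamma/\lambda$ for the fixed-center sum, and apply \eqref{eqVnorm} plus Jensen to the descent sum after identifying the convex combination giving $\xu^+-\xl$. Your explicit formula $\xu^+-\xl=\gamma\sum_t w_t(u_t-x)$ is exactly what the paper obtains (it gets there by unwinding \eqref{eqtuConv} and the definitions in \eqref{eqxlxu}), and your verification of the boundary coefficient $\alpha_T(1+p_T)=\Lambda_T\alpha_1p_1+\gamma/\lambda$ coincides with the paper's identity $\alpha_T(1+p_T)=\Lambda_T\alpha_1p_1+1-\Lambda_T(1-\alpha_1)$.
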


\begin{proof}
		To prove the proposition it suffices to estimate the right-hand-side of \eqref{eqQBound}. We make three observations regarding the terms in \eqref{eqQBound} and \eqref{eqUpsilon}. First, by \eqref{eqLambdaConv},
		\begin{align}
		\lambda\beta\Lambda_T\sum_{t=1}^{T}\frac{\alpha_t}{\Lambda_t}V(x,u) = \lambda\beta(1-\Lambda_T(1-\alpha_1))V(x,u).
		\end{align}
		Second, by \eqref{eqVnorm}, \eqref{eqLambdaConv}, \eqref{eqtuConv}, \eqref{eqGScond}, and the fact that $\tu_0=\xu$ and $\tx^+=\tu_T$ in the ProxAG procedure, we have
		\begin{align}
		\lambda\beta\Lambda_T\sum_{t=1}^{T}\frac{\alpha_t}{\Lambda_t}V(x,u_t) \ge& \frac{\nu\gamma\beta}{2}\cdot\frac{\Lambda_T}{(1-\Lambda_T(1-\alpha_1))}\sum_{t=1}^{T}\frac{\alpha_t}{\Lambda_t}\|x - u_t\|^2
		\\
		\ge & \frac{\nu\gamma\beta}{2}\left\|x-\frac{\Lambda_T}{1-\Lambda_T(1-\alpha_1)}\sumt[T]\frac{\alpha_t}{\Lambda_t}u_t\right\|^2
		\\
		= & \frac{\nu\gamma\beta}{2}\left\|x-\frac{\tu_T - \Lambda_T(1-\alpha_1)\tu_0}{1-\Lambda_T(1-\alpha_1)} \right\|^2
		\\
		= & \frac{\nu\gamma\beta}{2}\left\|x-\frac{\lambda}{\gamma}\tu_T - \left(1-\frac{\lambda}{\gamma}\right)\tu_0 \right\|^2
		\\
		= & \frac{\nu\beta}{2\gamma}\left\|\gamma x- \lambda\tx^+ - (\gamma-\lambda)\xu\right\|^2
		\\
		= & \frac{\nu\beta}{2\gamma}\|\xl - \xu^+\|^2,
		\end{align}
		where the last equality follows from \eqref{eqxlxu}.		
		Third, by \eqref{eqAGSpar}, the fact that $\Lambda_1=1$ in \eqref{eqLambda}, and
		the relations that $u_0 = x$ and $u_T=x^+$ in the ProxAG procedure, we have
		\begin{align}
		&\lambda\beta\Lambda_T\sum_{t=1}^{T}\frac{\alpha_t}{\Lambda_t}[p_tV(u\tp[t], u) - (1+p_t)V(u_t,u)] 
		\\
		&+ \lambda\Lambda_T\sum_{t=1}^{T}\frac{\alpha_tq_t}{\Lambda_t}[V(u\tp[t], u) - V(u_t,u)]
		\\
		=& \lambda\beta\Lambda_T\left[\alpha_1p_1V(u_0,u) - \sumt[T-1]\left(\frac{\alpha_t(1+p_t)}{\Lambda_t} - \frac{\alpha\tn[t]p\tn[t]}{\Lambda\tn[t]}\right)V(u_t,u) \right.
		\\
		& \left.- \frac{\alpha_T(1+p_T)}{\Lambda_T}V(u_T,u)\right] + \lambda\alpha_Tq_T[V(u_0, u) - V(u_T,u)]
		\\
		= & \lambda\beta\left[\Lambda_T\alpha_1p_1V(u_0,u) - \alpha_T(1+p_T)V(u_T,u)\right] +  \lambda\alpha_Tq_T[V(u_0, u) - V(u_T,u)]
		\\
		= & \lambda\beta\left[\Lambda_T\alpha_1p_1V(x,u) - \alpha_T(1+p_T)V(x^+,u)\right] +  \lambda\alpha_Tq_T[V(x, u) - V(x^+,u)]	
		.
		\end{align}
		Using the above three observations in \eqref{eqQBound}, we have
		\begin{align}
		\begin{aligned}
			& Q(\xu^+, u) - (1-\gamma)Q(\xu, u)
			\\
			\le & \lambda\beta\left[(1-\Lambda_T(1-\alpha_1) + \Lambda_T\alpha_1p_1)V(x,u) - \alpha_T(1+p_T)V(x^+,u)\right] 
			\\
			& + \lambda\alpha_Tq_T[V(x, u) - V(x^+,u)] - \frac{\nu\beta}{2\gamma}\|\xl - \xu^+\|^2.
		\end{aligned}
		\end{align}
		
		Comparing the above equation with \eqref{eqQBoundSpec}, it now remains to show that 
		\begin{align}
			\label{tmp5}
			\alpha_T(1+p_T) = \Lambda_T\alpha_1p_1 + 1-\Lambda_T(1-\alpha_1).
		\end{align}
By \eqref{eqLambdaConv}, the last relation in \eqref{eqAGSpar}, and the fact that $\Lambda_1=1$, we have
		\begin{align}
			\frac{\alpha\tn p\tn}{\Lambda\tn} = \frac{\alpha_tp_t}{\Lambda_t} + \frac{\alpha_t}{\Lambda_t} = \ldots = \frac{\alpha_1p_1}{\Lambda_1} + \sumt[t]\frac{\alpha_i}{\Lambda_i} = \alpha_1p_1 + \frac{1-\Lambda_t(1-\alpha_1)}{\Lambda_t}.
		\end{align}
		Using the second relation in \eqref{eqAGSpar} to the above equation, we have
		\begin{align}
			\frac{\alpha_t(1+p_t)}{\Lambda_t} = \alpha_1p_1 + \frac{1-\Lambda_t(1-\alpha_1)}{\Lambda_t},
		\end{align}
		which implies
		$
			\alpha_t(1+p_t) = \Lambda_t\alpha_1p_1 + 1-\Lambda_t(1-\alpha_1)
		$
		for any $ 1 \le t \le T$.
	\qed\end{proof}

\vgap

With the help of the above proposition and Lemma \ref{lemPsiQ}, we are now ready to 
establish the convergence of the AGS method. Note that the following sequence will the used in the analysis of
the AGS method:
	\begin{align}
	\label{eqGamma}
	\Gamma_k = \begin{cases}
	1 & k=1
	\\
	(1-\gamma_k)\Gamma\kp & k>1.
	\end{cases}
	\end{align}

\vgap

\begin{theorem}	
	\label{thmGSconvergence}
	Suppose that \eqref{eqGScond} and \eqref{eqAGSpar} hold. If
	\begin{align}
	\label{eqcondbeta}
	\gamma_1=1\text{ and }\beta_k\ge \frac{L\gamma_k}{\nu},
	\end{align}
	then
	\begin{align}
	\label{eqGSconvergence}
	\begin{aligned}
	&\phi(\xu_k) - \phi(u)
	\\
	\le& \Gamma_k\sumt[k]\frac{\lambda_i\alpha_{T_i}(\beta_i(1+p_{T_i})+q_{T_i})}{\Gamma_i}(V(x\tp[i],u) - V(x_i, u)),
	\end{aligned}
	\end{align}
	where $\Gamma_k$ is defined in \eqref{eqGamma}.
\end{theorem}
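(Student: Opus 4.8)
The plan is to convert the one-iteration estimates already at hand into a simple recursion for the optimality gap and then unroll it with Lemma~\ref{lemSum}. Fix $u\in X$ and set $\delta_k := \phi(\xu_k) - \phi(u)$. First I would apply Lemma~\ref{lemPsiQ}, which bounds $\delta_k$ by $(1-\gamma_k)\delta_{k-1}$ plus the ``$Q$-gap'' $Q_k(\xu_k,u) - (1-\gamma_k)Q_k(\xu_{k-1},u)$ plus $\tfrac{L}{2}\|\xu_k-\xl_k\|^2$. Next I would bound the $Q$-gap by Proposition~\ref{proQBoundSpec}, which is valid since \eqref{eqGScond} and \eqref{eqAGSpar} are assumed. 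That proposition is stated in the ProxAG procedure's local notation, so I would first translate it back to the $k$-th outer iteration using the correspondences recorded around \eqref{eqxlxu} (namely $\xu^+\leftrightarrow\xu_k$, $\xl\leftrightarrow\xl_k$, $x\leftrightarrow x_{k-1}$, $x^+\leftrightarrow x_k$, and $\beta,\lambda,\gamma,T,\alpha_t,p_t,q_t$ the parameters of the $k$-th call), which turns \eqref{eqQBoundSpec} into
\[
Q_k(\xu_k,u) - (1-\gamma_k)Q_k(\xu_{k-1},u) \le \lambda_k\alpha_{T_k}\bigl(\beta_k(1+p_{T_k})+q_{T_k}\bigr)\bigl(V(x_{k-1},u)-V(x_k,u)\bigr) - \tfrac{\nu\beta_k}{2\gamma_k}\|\xu_k-\xl_k\|^2 .
\]

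Substituting this into the estimate from Lemma~\ref{lemPsiQ}, the two quadratic terms merge into $\bigl(\tfrac{L}{2}-\tfrac{\nu\beta_k}{2\gamma_k}\bigr)\|\xu_k-\xl_k\|^2$, which is nonpositive exactly because $\beta_k\ge L\gamma_k/\nu$ by the second part of \eqref{eqcondbeta}; discarding it leaves the clean recursion
\[
\delta_k \le (1-\gamma_k)\delta_{k-1} + B_k, \qquad B_k := \lambda_k\alpha_{T_k}\bigl(\beta_k(1+p_{T_k})+q_{T_k}\bigr)\bigl(V(x_{k-1},u)-V(x_k,u)\bigr).
\]
Then I would invoke Lemma~\ref{lemSum} with $c_k=\gamma_k$ and $C_k=\Gamma_k$, so that \eqref{eqW} coincides with the defining recursion \eqref{eqGamma}; one first records that the hypotheses are met, i.e. $\gamma_k\in(0,1]$ (indeed $\gamma_k\le\lambda_k\le1$ follows from \eqref{eqGScond}) and $\Gamma_k>0$. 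Since $\gamma_1=1$ by the first part of \eqref{eqcondbeta}, the term $\tfrac{1-c_1}{C_1}\delta_0$ in \eqref{eqDeltaBound} vanishes, and what survives is precisely \eqref{eqGSconvergence}.

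I do not expect a genuine obstacle here: the technical heart of the argument has already been carried out in Proposition~\ref{proQBoundSpec} (and, behind it, Lemma~\ref{lemuOC}), so relative to those this theorem is essentially bookkeeping. The step most prone to error is (i) faithfully translating the ProxAG procedure's local variables into the outer-iteration variables, since the same symbols $\xu,\xl,x,\beta,\lambda,\gamma$ play different roles inside the procedure; a secondary point is (ii) checking the sign of $\tfrac{L}{2}-\tfrac{\nu\beta_k}{2\gamma_k}$ so that the leftover quadratic term may safely be dropped. Everything else is a direct application of Lemmas~\ref{lemPsiQ} and \ref{lemSum}.
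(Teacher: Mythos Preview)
Your proposal is correct and follows essentially the same route as the paper: apply Proposition~\ref{proQBoundSpec} (translated to the $k$-th outer iteration), substitute into Lemma~\ref{lemPsiQ}, use $\beta_k\ge L\gamma_k/\nu$ to drop the residual quadratic term, and unroll the resulting recursion with Lemma~\ref{lemSum}, where $\gamma_1=1$ kills the initial term. The paper's proof is exactly this sequence of steps, and your remarks on translating the ProxAG notation and checking $\gamma_k\le\lambda_k\le1$ are appropriate.
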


	\begin{proof}
		It follows from Proposition \ref{proQBoundSpec} that for all $u\in X$,
		\begin{align}
		& Q_k(\xu_k, u) - (1-\gamma_k)Q_k(\xu\kp, u) 
		\\
		\le & \lambda_k\alpha_{T_k}(\beta_k(1+p_{T_k})+q_{T_k})(V(x\kp,u) - V(x_k, u)) - \frac{\nu\beta_k}{2\gamma_k}\|\xu_k - \xl_k\|^2.
		\end{align}
		Substituting the above bound to \eqref{eqPsiQ} in Lemma \ref{lemPsiQ}, and using \eqref{eqcondbeta}, we have
		\begin{align}
\begin{aligned}
		& \phi(\xu_k) - \phi(u) 
		\\
		\le & (1-\gamma_k)[\phi(\xu\kp) - \phi(u)] 
		\\
		&+ \lambda_k\alpha_{T_k}(\beta_k(1+p_{T_k})+q_{T_k})(V(x\kp,u) - V(x_k, u)),
\end{aligned}		\end{align}
		which, in view of Lemma \ref{lemSum} (with $c_k=\gamma_k$, $C_k = \Gamma_k$, and $\delta_k = \phi(\xu_k) - \phi(u)$), then implies that 
		\begin{align}
		& \phi(\xu_k) - \phi(u) 
		\\
		\le & \Gamma_k\left[\frac{1-\gamma_1}{\Gamma_1}(\phi(\xu_0) - \phi(u)) \right.
		\\
		&+\left. \sumt[k]\frac{\lambda_i\alpha_{T_i}(\beta_i(1+p_{T_i})+q_{T_i})}{\Gamma_i}(V(x\tp[i],u) - V(x_i, u))\right]
		\\
		= & \Gamma_k\sumt[k]\frac{\lambda_i\alpha_{T_i}(\beta_i(1+p_{T_i})+q_{T_i})}{\Gamma_i}(V(x\tp[i],u) - V(x_i, u)),
		\end{align}
		where the last equality follows from the fact that $\gamma_1=1$ in \eqref{eqcondbeta}. 
	\qed\end{proof}

\vgap

There are many possible selections of parameters that satisfy the assumptions of the above theorem. 
In the following corollaries we describe two different ways to specify the parameters of Algorithm \ref{algAGS} 
that lead to the optimal complexity bounds in terms of the number of gradient evaluations of
$\nabla f$ and $\nabla h$.

\vgap

\begin{corollary}
	\label{corAGSeasy}
	Consider problem \eqref{eqSmoothProblem} with the Lipschitz constants in \eqref{eqL} and \eqref{eqM} satisfing $M\ge L$.
	Suppose that the parameters of Algorithm \ref{algAGS} are set to
	\begin{align}
	\label{eqCond}
	\begin{aligned}
	& \gamma_k = \frac{2}{k+1},\ 
	T_k\equiv T:=\left\lceil\sqrt{\frac{M}{L}}\right\rceil,
	\\
	& \lambda_k = \begin{cases}
	1 & k=1,\\\dfrac{\gamma_k(T+1)(T+2)}{T(T+3)} & k>1,
	\end{cases}
	 \text{ and } 
	\beta_k = \frac{3L\gamma_k}{\nu k\lambda_k}.
	\end{aligned}	
	\end{align}
	Also assume that
	the parameters in the first call to the ProxAG procedure ($k=1$) are set to
	\begin{align}
	\label{eqCondAGS1}
	\alpha_t = \frac{2}{t+1},\ \ p_t = \frac{t-1}{2}, \text{ and }q_t = \frac{6M}{\nu t},
	\end{align}
	and the parameters in the remaining calls to the ProxAG procedure ($k>1$) are set to
	\begin{align}
	\label{eqCondAGS2}
	\alpha_t = \frac{2}{t+2},\ \ p_t = \frac{t}{2}, \text{ and }q_t = \frac{6M}{\nu k(t+1)}.
	\end{align}
	Then the numbers of gradient evaluations of $\nabla f$ and $\nabla h$ performed by the AGS method to 
	compute an $\epsilon$-solution of \eqref{eqSmoothProblem} can be bounded by 
	\begin{align}
	\label{eqNfeasy}
	N_{f}:=\sqrt{\frac{30LV(x_0, x^*)}{\nu\epsilon}}
	\end{align}
	and
	\begin{align}
	\label{eqNheasy}
	N_{h} := \sqrt{\frac{30MV(x_0, x^*)}{\nu\epsilon}} + \sqrt{\frac{30LV(x_0, x^*)}{\nu\epsilon}}
	\end{align}
	respectively, where $x^*$ is a solution to \eqref{eqSmoothProblem}.
\end{corollary}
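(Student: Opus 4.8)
The plan is to specialize the master recursion \eqref{eqGSconvergence} from Theorem~\ref{thmGSconvergence} to the parameter choices \eqref{eqCond}--\eqref{eqCondAGS2} and count iterations. First I would verify that all the hypotheses of Theorem~\ref{thmGSconvergence} are met by these choices: namely that \eqref{eqGScond}, \eqref{eqAGSpar}, and \eqref{eqcondbeta} hold. Checking \eqref{eqAGSpar} is a routine substitution — with $\alpha_t=2/(t+1)$, $p_t=(t-1)/2$ for $k=1$ (resp.\ $\alpha_t=2/(t+2)$, $p_t=t/2$ for $k>1$), one has $\Lambda_t=2/(t(t+1))$ (resp.\ $6/(t(t+1)(t+2))$ type formulas), and the ratios $\alpha_t q_t/\Lambda_t$ and $\alpha_t(1+p_t)/\Lambda_t$ are designed to telescope. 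Checking the middle relation of \eqref{eqGScond}, $\Lambda_T(1-\alpha_1)=1-\gamma/\lambda$, is where the peculiar formula $\lambda_k=\gamma_k(T+1)(T+2)/(T(T+3))$ for $k>1$ comes from; for $k=1$ one has $\gamma_1=1$, $\lambda_1=1$, $\alpha_1=1$ so $1-\alpha_1=0=1-\gamma_1/\lambda_1$. The condition $\beta p_t+q_t\ge \lambda M\alpha_t/\nu$ reduces, after plugging in, to $q_t\ge$ (something $\le 6M/(\nu\cdot\text{stuff})$), so the $6M$ constant in \eqref{eqCondAGS1}--\eqref{eqCondAGS2} is chosen to absorb it; similarly $\beta_k=3L\gamma_k/(\nu k\lambda_k)\ge L\gamma_k/\nu$ since $k\lambda_k\le 3$, giving \eqref{eqcondbeta}.

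Next I would evaluate the right-hand side of \eqref{eqGSconvergence}. With $\gamma_k=2/(k+1)$ one gets $\Gamma_k=2/(k(k+1))$. The key simplification is that the coefficient $\lambda_i\alpha_{T_i}(\beta_i(1+p_{T_i})+q_{T_i})/\Gamma_i$ should turn out to be nearly constant in $i$ (or at worst telescoping), so that $\sum_{i=1}^k$ of the differences $V(x_{i-1},u)-V(x_i,u)$ telescopes to essentially $V(x_0,u)-V(x_k,u)\le V(x_0,u)$. Concretely, $\alpha_T=2/(T+1)$ or $2/(T+2)$, $1+p_T=(T+1)/2$ or $(T+2)/2$, $\beta_i=3L\gamma_i/(\nu i\lambda_i)$, and $q_T=6M/(\nu\cdot\text{stuff})$; multiplying out, $\lambda_i\alpha_{T_i}\beta_i(1+p_{T_i})/\Gamma_i$ collapses to something like $3L/(2\nu)\cdot(k+1)\cdot\text{const}$ and the $q$ term contributes $\lambda_i\alpha_{T_i}q_{T_i}/\Gamma_i$ with a $6M/\nu$ but divided by $T^2\ge M/L$, bringing it down to order $L/\nu$ as well. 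Combining, plugging $u=x^*$, I expect to land on a bound of the form $\phi(\xu_N)-\phi^*\le C\,L\,V(x_0,x^*)/(\nu N^2)$ for an explicit constant $C$ (the $30$ in \eqref{eqNfeasy}--\eqref{eqNheasy} suggests $C=15$, so that requiring the RHS $\le\epsilon$ gives $N\ge\sqrt{15LV(x_0,x^*)/(\nu\epsilon)}$, hence $N_f=\lceil\cdot\rceil$ bounded as stated).

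Finally, for the gradient counts: each outer iteration $k=1,\dots,N$ performs exactly one evaluation of $\nabla f$ (in forming $g_k=l_f(\xl_k,\cdot)$), so the number of $\nabla f$ evaluations is $N\le N_f$. Each outer iteration also performs $T_k\equiv T=\lceil\sqrt{M/L}\rceil$ inner ProxAG iterations, each using one $\nabla h$, for a total of $NT$ evaluations of $\nabla h$; since $N\le\sqrt{15LV(x_0,x^*)/(\nu\epsilon)}$ and $T\le\sqrt{M/L}+1$, we get $NT\le\sqrt{15MV(x_0,x^*)/(\nu\epsilon)}+\sqrt{15LV(x_0,x^*)/(\nu\epsilon)}\le N_h$ after bounding $\sqrt{15}\le\sqrt{30}$.

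The main obstacle I anticipate is the bookkeeping in the second paragraph: verifying that the $i$-dependence in the coefficient $\lambda_i\alpha_{T_i}(\beta_i(1+p_{T_i})+q_{T_i})/\Gamma_i$ really does produce a clean telescoping sum rather than an awkward weighted sum of the $V(x_{i-1},u)-V(x_i,u)$ terms. The discrepancy between the $k=1$ parameters \eqref{eqCondAGS1} and the $k>1$ parameters \eqref{eqCondAGS2} (different $\alpha_t$, $p_t$, $q_t$) means the $i=1$ term must be handled separately and shown to be consistent with the pattern; the $\lambda_1=1$ versus $\lambda_k=\gamma_k(T+1)(T+2)/(T(T+3))$ split is precisely what makes this work, but confirming it requires care. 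I would isolate this as a short computational lemma (or absorb it into the corollary's proof as a displayed identity) before doing the final $N^2$ estimate.
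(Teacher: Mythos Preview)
Your proposal is correct and follows essentially the same route as the paper: verify \eqref{eqGScond}, \eqref{eqAGSpar}, \eqref{eqcondbeta} separately for $k=1$ and $k>1$, then compute $\xi_i:=\lambda_i\alpha_{T_i}(\beta_i(1+p_{T_i})+q_{T_i})/\Gamma_i$, observe $\xi_1>\xi_2=\xi_3=\cdots$ so the sum in \eqref{eqGSconvergence} telescopes to $\Gamma_k\xi_1 V(x_0,x^*)$, and count iterations. Two small corrections for when you execute it: for $k>1$ the sequence is $\Lambda_t=6/((t+1)(t+2))$ (not $6/(t(t+1)(t+2))$), and the constant works out to $\phi(\xu_k)-\phi^*\le \tfrac{30L}{\nu k(k+1)}V(x_0,x^*)$ rather than $15L/(\nu k^2)$, so $N_f=\sqrt{30LV(x_0,x^*)/(\nu\epsilon)}$ is sharp and $TN_f\le(\sqrt{M/L}+1)N_f=N_h$ with no slack left over.
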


	\begin{proof}
		Let us start with verification of \eqref{eqGScond}, \eqref{eqAGSpar}, and \eqref{eqcondbeta} for the purpose of applying Theorem \ref{thmGSconvergence}. We will consider the first call to the ProxAG procedure ($k=1$) and the remaining calls ($k>1$) separately.
		
		When $k=1$, by \eqref{eqCond} we have $\lambda_1=\gamma_1=1$, and $\beta_1=3L/\nu$, hence \eqref{eqcondbeta} holds immediately. By \eqref{eqCondAGS1} we can observe that $\Lambda_t=2/(t(t+1))$ satisfies \eqref{eqLambda}, and that
		\begin{align}
			\frac{\alpha_tq_t}{\Lambda_t} \equiv \frac{6M}{\nu},\text{ and }\frac{\alpha_t(1+p_t)}{\Lambda_t} = \frac{t(t+1)}{2}=\frac{\alpha\tn p\tn}{\Lambda\tn},
		\end{align}
		hence \eqref{eqAGSpar} holds. In addition, by \eqref{eqCond} and \eqref{eqCondAGS1} we have $\lambda=\gamma=1$ and $\alpha_1=1$ in \eqref{eqGScond}, and that
		\begin{align}
			\beta p_t + q_t \ge q_t = \frac{6M}{\nu t} > \frac{2M}{\nu(t+1)} = \frac{\lambda M \alpha_t}{\nu}.
		\end{align}
		Therefore \eqref{eqGScond} also holds.
		
		For the case when $k>1$, we can observe from \eqref{eqCondAGS2} that $\Lambda_t=6/(t+1)(t+2)$ satisfies \eqref{eqLambda}, ${\alpha_tq_t}/{\Lambda_t}\equiv{2M}/{(\nu k)}$, and that
		\begin{align}
		\frac{\alpha_t(1+p_t)}{\Lambda_t} = \frac{(t+1)(t+2)}{6}=\frac{\alpha\tn p\tn}{\Lambda\tn}.
		\end{align}
		Therefore \eqref{eqAGSpar} holds. Also, from \eqref{eqCond} and noting that $k, T\ge 1$, we have
		\begin{align}
		\label{eqtmp1}
		\frac{3}{k} > \frac{3\gamma_k}{2} = \frac{3\lambda_k}{2}\left(1 - \frac{2}{(T+1)(T+2)}\right) \ge \frac{3\lambda_k}{2}\left(1-\frac{2}{2\cdot 3}\right) = \lambda_k.
		\end{align}
		Applying the above relation to the definition of $\beta_k$ in \eqref{eqCond} we have \eqref{eqcondbeta}. It now suffices to verify \eqref{eqGScond} in order to apply Theorem \ref{thmGSconvergence}. Applying \eqref{eqCond}, \eqref{eqCondAGS2}, \eqref{eqtmp1}, and noting that $k\ge 2$ and that $\Lambda_T=6/(T+1)(T+2)$ with $T\ge 1$, we can verify in \eqref{eqGScond} that
		\begin{align}
		& \lambda =  \frac{\gamma(T+1)(T+2)}{T(T+3)} = \frac{2}{k+1}\left(1+\frac{2}{T(T+3)}\right)\le \frac{2}{3}\left(1+\frac{2}{1\cdot 4}\right)=1,
		\\
		&\Lambda_T(1-\alpha_1) = \frac{2}{(T+1)(T+2)} = 1 - \frac{T(T+3)}{(T+1)(T+2)} = 1- \frac{\gamma}{\lambda},
		\\
		&\beta p_t + q_t> q_t = \frac{2M}{\nu (t+1)}\cdot\frac{3}{k}>\frac{2\lambda M}{\nu (t+1)}\ge \frac{\lambda M\alpha_t}{\nu}.
		\end{align}
		Therefore, the conditions in \eqref{eqGScond} are satisfied. 
		
		We are now ready to apply Theorem \ref{thmGSconvergence}. In particular, noting that $\alpha_t(1+p_t)\equiv 1$ from \eqref{eqCondAGS1} and \eqref{eqCondAGS2}, we obtain from \eqref{eqGSconvergence} (with $u=x^*$) that
		\begin{align}
		\label{eqxiseq}
		\phi(\xu_k) - \phi^*\le \Gamma_k\sumt[k]\xi_i(V(x\tp[i],x^*) - V(x_i, x^*)),
		\end{align}
		where
		\begin{align}
		\label{eqxi}
		\xi_i:=\frac{\lambda_i(\beta_i+\alpha_{T_i}q_{T_i})}{\Gamma_i},
		\end{align}
		
		Substituting \eqref{eqCond} and \eqref{eqCondAGS1} to \eqref{eqxi}, and noting that $\Gamma_i=2/(i(i+1))$ by \eqref{eqGamma}, we have
		\begin{align}
		\xi_1 = & \beta_1 + \alpha_{T} q_{T} =  \frac{3L}{\nu} + \frac{12 M}{\nu T(T+1)},\text{ and }
		\\
		\xi_i = & \frac{\lambda_i\beta_i}{\Gamma_i} + \frac{\lambda_i\alpha_{T_i} q_{T_i}}{\Gamma_i} = \frac{3L\gamma_i}{\nu i\Gamma_i} + \frac{\gamma_i}{\Gamma_i}\frac{(T_i+1)(T_i+2)}{T_i(T_i+3)}\frac{2}{T_i+2}\frac{6M}{\nu i(T_i+1)}
		\\
		\equiv & \frac{3L}{\nu} + \frac{12M}{\nu T(T+3)},\forall i>1.
		\end{align}
		Applying the above two results regarding $\xi_i$ to \eqref{eqxiseq}, and noting that $\xi_1>\xi_2$, we have
		\begin{align}
		& \phi(\xu_k) - \phi^* 
		\\
		\le & \Gamma_k\left[\xi_1(V(x_0,x^*) - V(x_1, x^*)) + \sum_{i=2}^k\xi_i(V(x_{i-1},x^*) - V(x_i, x^*)) \right]
		\\
		= & \Gamma_k\left[\xi_1(V(x_0,x^*) - V(x_1, x^*)) + \xi_2(V(x_1,x^*) - V(x_k, x^*)) \right]
		\\
		\le & \Gamma_k\xi_1V(x_0,x^*) 
		\\
		= & \frac{2}{k(k+1)}\left(\frac{3L}{\nu} + \frac{12M}{\nu T(T+1)}\right)V(x_0, x^*) 
		\\
		\le & \frac{30L}{\nu k(k+1)}V(x_0, x^*),
		\end{align}
		where the last inequality is due to the fact that $T\ge \sqrt{M/L}$.
		
		From the above inequality, the number of calls to the ProxAG procedure for computing an $\epsilon$-solution of \eqref{eqSmoothProblem} is bounded by $N_{f}$ in \eqref{eqNfeasy}.
		This is also the bound for the number of gradient evaluations of $\nabla f$. Moreover, the number of gradient evaluations of $\nabla h$ is bounded by
		\begin{align}
		TN_{f} \le \left(\sqrt{\frac{M}{L}}+1\right)N_{ f} = \sqrt{\frac{30MV(x_0, x^*)}{\nu\epsilon}}+\sqrt{\frac{30LV(x_0, x^*)}{\nu\epsilon}} = N_{ h}.
		\end{align}
	\qed\end{proof}

\vgap

In the above corollary, the constant factors in \eqref{eqNfeasy} and \eqref{eqNheasy} are both given by $\sqrt{30}$. 
In the following corollary, we provide a slightly different set of parameters for Algorithm \ref{algAGS} 
that results in a tighter constant factor for \eqref{eqNfeasy}.

\vgap

\begin{corollary}
	\label{corAGS}
	Consider problem \eqref{eqSmoothProblem} with the Lipschitz constants in \eqref{eqL} and \eqref{eqM} satisfing $M\ge L$.
	Suppose that the parameters in the first call to the ProxAG procedure ($k=1$) are set to
	\begin{align}
	\label{eqcondAGS1inner}
	\alpha_t=\frac{2}{t+1},\ p_t = \frac{t-1}{2}, \text{ and } q_t = \frac{7LT(T+1)}{4\nu t},
	\end{align}
	and that the parameters in the $k$-th call ($k>1$) are set to
	\begin{align}
	\label{eqcondAGSkinner}
	p_t\equiv p:=\sqrt{\frac{M}{L}},\ \alpha_t\equiv\alpha:=\frac{1}{p+1},\text{ and }q_t\equiv 0.
	\end{align}
	If the other parameters in Algorithm \ref{algAGS} satisfy
	\begin{align}
	\label{eqcondAGSouter}
\begin{aligned}
	& \gamma_k =\dfrac{2}{k+1}, 
	T_k:=\begin{dcases}
	\left\lceil\sqrt{\dfrac{8M}{7L}}\right\rceil, & k=1
	\\
	\left\lceil\dfrac{\ln(3)}{-\ln(1-\alpha)}\right\rceil, & k>1,
	\end{dcases}
	\\
	&\lambda_k:=\begin{dcases}
	1, & k=1
	\\
	\dfrac{\gamma_k}{1 - (1-\alpha)^{T_k}}, & k>1,
	\end{dcases}
 	\text{ and } \beta_k:=\begin{dcases}
	\dfrac{L}{\nu}, & k=1
	\\
	\dfrac{9L\gamma_k}{2\nu k\lambda_k}, & k>1,
	\end{dcases}
\end{aligned}	\end{align}
	where $\alpha$ is defined in \eqref{eqcondAGSkinner}, 
	then the numbers of gradient evaluations of $\nabla f$ and $\nabla h$ performed
	by the AGS method to find an $\varepsilon$-solution to problem \eqref{eqSmoothProblem} can be bounded by 
	\begin{align}
	\label{eqNf}
	N_{f}:=3\sqrt{\frac{LV(x_0, x^*)}{\nu\epsilon}}
	\end{align}
	and
	\begin{align}
	\label{eqNh}
	\begin{aligned}
	N_{h} := &(1+\ln 3)N_{f}\left(\sqrt{\frac{M}{L}}+1\right) 
	\\
	\le &7\left(\sqrt{\frac{MV(x_0, x^*)}{\nu\epsilon}} + \sqrt{\frac{LV(x_0, x^*)}{\nu\epsilon}}\right),
	\end{aligned}
	\end{align}
	respectively.
\end{corollary}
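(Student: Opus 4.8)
The plan is to mirror the proof of Corollary~\ref{corAGSeasy}: verify that the stated choices of $\{\gamma_k\},\{\lambda_k\},\{\beta_k\},\{T_k\},\{\alpha_t\},\{p_t\},\{q_t\}$ meet the hypotheses \eqref{eqGScond}, \eqref{eqAGSpar} and \eqref{eqcondbeta} of Theorem~\ref{thmGSconvergence}, then specialize the guarantee \eqref{eqGSconvergence} with $u=x^*$, and finally count outer and inner iterations. As before the verification splits naturally into the first call ($k=1$) and the remaining calls ($k>1$).

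For $k=1$, the inner parameters \eqref{eqcondAGS1inner} coincide with those of \eqref{eqCondAGS1} up to the value of $q_t$, so $\Lambda_t=2/(t(t+1))$ still satisfies \eqref{eqLambda}; one checks directly that $\alpha_tq_t/\Lambda_t$ is constant and $\alpha_t(1+p_t)/\Lambda_t=t(t+1)/2$, giving \eqref{eqAGSpar}, and since $\lambda_1=\gamma_1=1$, $\alpha_1=1$, $\beta_1=L/\nu$, both \eqref{eqcondbeta} and the first two parts of \eqref{eqGScond} are immediate, while $\beta_1p_t+q_t\ge q_t\ge\lambda M\alpha_t/\nu$ holds because $T_1\ge\sqrt{8M/(7L)}$ makes $q_t=7LT_1(T_1+1)/(4\nu t)$ large enough. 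For $k>1$, the structural novelty is that the inner loop now uses a \emph{geometrically} decaying sequence: with $\alpha_t\equiv\alpha=1/(p+1)$, $p_t\equiv p=\sqrt{M/L}$, $q_t\equiv0$ one has $\Lambda_t=(1-\alpha)^{t-1}$, and the identity $\alpha(1+p)=1$ makes the second ratio in \eqref{eqAGSpar} constant (the first being trivially zero). The choice $T_k=\lceil\ln3/(-\ln(1-\alpha))\rceil$ forces $(1-\alpha)^{T_k}\le1/3$, hence $\lambda_k\le\tfrac32\gamma_k\le1$ and $\Lambda_{T_k}(1-\alpha_1)=(1-\alpha)^{T_k}=1-\gamma_k/\lambda_k$, which are the first two requirements of \eqref{eqGScond}; moreover $\lambda_k\le3/(k+1)\le9/(2k)$ together with $\beta_k=9L\gamma_k/(2\nu k\lambda_k)$ gives \eqref{eqcondbeta}, and a short estimate using $M=Lp^2$ and $k\gamma_k<2$ shows $\beta_kp\ge\lambda_kM\alpha/\nu$, completing \eqref{eqGScond}.

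With the hypotheses in place, I would apply Theorem~\ref{thmGSconvergence} with $u=x^*$. Since $\alpha_t(1+p_t)\equiv1$ in every call (both \eqref{eqcondAGS1inner} and \eqref{eqcondAGSkinner}), the bound \eqref{eqGSconvergence} collapses to $\phi(\xu_k)-\phi^*\le\Gamma_k\sum_{i=1}^k\xi_i\bigl(V(x_{i-1},x^*)-V(x_i,x^*)\bigr)$ with $\xi_i:=\lambda_i(\beta_i+\alpha_{T_i}q_{T_i})/\Gamma_i$. The crucial computation is that $\xi_i\equiv9L/(2\nu)$ for all $i$: for $i=1$ one gets $\beta_1+\alpha_{T_1}q_{T_1}=L/\nu+7L/(2\nu)$, and for $i>1$, $q_t\equiv0$ and $\lambda_i\beta_i/\Gamma_i=9L\gamma_i(i+1)/(4\nu)=9L/(2\nu)$ using $\Gamma_i=2/(i(i+1))$ from \eqref{eqGamma}. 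Because $\xi_i$ is exactly constant the sum telescopes cleanly to $\xi_1(V(x_0,x^*)-V(x_k,x^*))$, yielding $\phi(\xu_k)-\phi^*\le\tfrac{9L}{2\nu}\Gamma_kV(x_0,x^*)=9LV(x_0,x^*)/(\nu k(k+1))$, which is at most $\epsilon$ once $k\ge N_f$ as in \eqref{eqNf}; this bounds the number of outer iterations, hence of $\nabla f$ evaluations.

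Finally, the number of $\nabla h$ evaluations is $\sum_{k=1}^{N_f}T_k$, which I would bound termwise by $(1+\ln3)(\sqrt{M/L}+1)$: for $k>1$ the inequality $-\ln(1-\alpha)\ge\alpha$ gives $T_k\le\ln3/\alpha+1=\ln3\,(p+1)+1\le(1+\ln3)(p+1)$ since $p+1\ge1$, while $T_1=\lceil\sqrt{8M/(7L)}\rceil\le\sqrt{8/7}\,\sqrt{M/L}+1\le(1+\ln3)(\sqrt{M/L}+1)$ because $\sqrt{8/7}$ and $1$ are both below $1+\ln3$. Summing over the at most $N_f$ outer iterations gives $N_h=(1+\ln3)N_f(\sqrt{M/L}+1)$, and the displayed numerical bound in \eqref{eqNh} follows from $3(1+\ln3)\le7$. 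I expect the most delicate point to be the last inequality in \eqref{eqGScond} for $k>1$, namely $\beta_kp\ge\lambda_kM\alpha/\nu$, since it requires combining the upper bound on $\lambda_k$ coming from the choice of $T_k$, the formula for $\beta_k$, and the relation $M=Lp^2$ in just the right way.
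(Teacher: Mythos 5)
Your proposal is correct and follows essentially the same route as the paper: verify \eqref{eqGScond}, \eqref{eqAGSpar}, \eqref{eqcondbeta} separately for $k=1$ and $k>1$, apply Theorem~\ref{thmGSconvergence} with $u=x^*$, observe that $\xi_i\equiv 9L/(2\nu)$ so the sum telescopes to $9LV(x_0,x^*)/(\nu k(k+1))$, and then count inner iterations using $-\ln(1-\alpha)\ge\alpha$. The only cosmetic difference is that you bound $\sum_k T_k$ termwise rather than splitting off $T_1$, and the ``delicate point'' you flag ($\nu\beta_k p/(\lambda_k M\alpha)>1$) is handled in the paper exactly as you sketch, via $M=Lp^2$, $(1-(1-\alpha)^{T_k})^2\ge 4/9$, and $9(k+1)/(4k)>9/4$.
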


	\begin{proof}
		Let us verify \eqref{eqGScond}, \eqref{eqcondbeta}, and \eqref{eqAGSpar} first, so that we could apply Theorem \ref{thmGSconvergence}. We consider the case when $k=1$ first. By the definition of $\gamma_k$ and $\beta_k$ in \eqref{eqcondAGSouter}, it is clear that \eqref{eqcondbeta} is satisfied when $k=1$. 
		Also, by \eqref{eqcondAGS1inner} we have that $\Lambda_t=2/(t(t+1))$ in \eqref{eqLambda}, 
		\begin{align}
			\frac{\alpha_tq_t}{\Lambda_t}\equiv\frac{7LT_1(T_1+1)}{4\nu},\text{ and }\frac{\alpha_t(1+p_t)}{\Lambda_t}=\frac{t(t+1)}{2}=\frac{\alpha\tn p\tn}{\Lambda\tn},
		\end{align}
		hence \eqref{eqAGSpar} also holds. Moreover, by \eqref{eqcondAGS1inner} and \eqref{eqcondAGSouter}, we can verify in \eqref{eqGScond} that
		\begin{align}
			\lambda = \gamma = 1, \Lambda_{T_1}(1-\alpha_1) = 0 = 1-\frac{\gamma}{\lambda},
		\end{align}
		and 
		\begin{align}
			\beta p_t + q_t\ge q_t > \frac{7LT^2}{4\nu t} = \frac{8M}{4\nu t}> \frac{M\alpha_t}{\nu}.
		\end{align}
		Therefore the relations in \eqref{eqGScond} are all satisfied.
		
		Now we consider the case when $k>1$. By \eqref{eqLambda} and \eqref{eqcondAGSkinner}, we observe that $\Lambda_t = (1-\alpha)^{t-1}$ for all $t\ge 1$. Moreover, from the definition of $T_k$ in \eqref{eqcondAGSouter}, we can also observe that
		\begin{align}
		(1-\alpha)^{T_k}\le \frac{1}{3}.
		\end{align}
		Four relations can be derived based on the aforementioned two observations, \eqref{eqcondAGSkinner}, and \eqref{eqcondAGSouter}. 
		First,
		\begin{align}
		\frac{\alpha_t q_t}{\Lambda_t}\equiv 0,\ 
		\frac{\alpha_t(1+p_t)}{\Lambda_t} = \frac{1}{(1-\alpha)^{t-1}} = \frac{\alpha\tn p\tn}{\Lambda\tn},
		\end{align}
		which verifies \eqref{eqAGSpar}. 
		Second,
		\begin{align}
		\beta_k = \frac{9L(1-(1-\alpha)^{T_k})}{2\nu k}\ge \frac{3L}{\nu k}>\frac{L\gamma_k}{\nu},
		\end{align}
		which leads to \eqref{eqcondbeta}. 
		Third, noting that $k\ge 2$, we have
		\begin{align}
		\frac{\gamma_k}{1-\Lambda_{T_k}(1-\alpha)} = \lambda_k =\frac{\gamma_k}{1-(1-\alpha)^{T_k}}\le \frac{3\gamma_k}{2} = \frac{3}{k+1}\le 1.
		\end{align}
		Fourth, 
		\begin{align}
		\frac{\nu\beta_kp}{\lambda_k M\alpha} 
		= & \frac{9L\gamma_kp(p+1)}{2k\lambda_k^2M} = \frac{9Lp(p+1)\left(1-(1-\alpha)^{T_k}\right)^2}{2k\gamma_kM} 
		\\
		= & \frac{9(k+1)}{4k}\cdot\left(\frac{Lp(p+1)}{M}\right)\cdot\left(1-(1-\alpha)^{T_k}\right)^2
		\\
		> & \frac{9}{4} \cdot 1 \cdot \frac{4}{9} = 1.
		\end{align}
		The last two relations imply that \eqref{eqGScond} holds. 
		
		Summarizing the above discussions regarding both the cases $k=1$ and $k>1$, applying Theorem \ref{thmGSconvergence}, and noting that $\alpha_t(1+p_t)\equiv 1$, we have
		\begin{align}
		\label{eqpsixi}
		\phi(\xu_k) - \phi(u)\le \Gamma_k\sumt[k]\xi_i(V(x\tp[i],u)-V(x_i,u)),\ \forall u\in X,
		\end{align}
		where
		\begin{align}
		\xi_i:=\frac{\lambda_i(\beta_i+\alpha_{T_i}q_{T_i})}{\Gamma_i}.
		\end{align}
		It should be observed from the definition of $\gamma_k$ in \eqref{eqcondAGSouter} that $\Gamma_i:=2/(i(i+1))$ satisfies \eqref{eqGamma}. Using this observation, applying \eqref{eqcondAGS1inner}, \eqref{eqcondAGSkinner}, and \eqref{eqcondAGSouter} to the above equation we have
		\begin{align}
		\xi_1 = \beta_1 + \alpha_{T_1}q_{T_1} = \frac{L}{\nu} + \frac{7L}{2\nu} =  \frac{9L}{2\nu}
		\end{align}
		and
		\begin{align}
		\xi_i=\frac{\lambda_i\beta_i}{\Gamma_i}\equiv\frac{9L}{2\nu}, \ \forall i>1.
		\end{align}
		Therefore, \eqref{eqpsixi} becomes
		\begin{align}
		\label{eqk2}
		\begin{aligned}
		\phi(\xu_k) - \phi(u) \le& \frac{9L}{\nu k(k+1)}(V(x_0,u) - V(x_k, u)) 
		\\
		\le &\frac{9L}{\nu k(k+1)}V(x_0, u).
		\end{aligned}
		\end{align}
		
		Setting $u=x^*$ in the above inequality, we observe that the number of calls to the ProxAG procedure for computing an $\epsilon$-solution of \eqref{eqSmoothProblem} is bounded by $N_{f}$ in \eqref{eqNf}.
		This is also the bound for the number of gradient evaluations of $\nabla f$. Moreover, by \eqref{eqcondAGSkinner},  \eqref{eqcondAGSouter}, and \eqref{eqNf} we conclude that the number of gradient evaluations of $\nabla h$ is bounded by
		\begin{align}
		\sum_{k=1}^{N_{f}}T_k =& T_1 + \sum_{k=2}^{N_{f}}T_k
		\le  \left(\sqrt{\frac{8M}{7L}}+1\right) + (N_{f}-1)\left(\frac{\ln{3}}{-\ln(1-\alpha)} +1\right)
		\\
		\le & 
		\left(\sqrt{\frac{8M}{7L}}+1\right) + (N_{f}-1)\left(\frac{\ln 3}{\alpha} + 1\right)
		\\
		= & \left(\sqrt{\frac{8M}{7L}}+1\right) + (N_{f}-1)\left(\left(\sqrt{\frac{M}{L}}+1\right)\ln 3+1\right)
		\\
		< & (1+\ln 3)N_{f}\left(\sqrt{\frac{M}{L}}+1\right)
		\\
		< & 7\left(\sqrt{\frac{MV(x_0, x^*)}{\nu\epsilon}} + \sqrt{\frac{LV(x_0, x^*)}{\nu\epsilon}}\right).
		\end{align}
		Here the second inequity is from the property of logarithm functions that $-\ln(1-\alpha)\ge \alpha$ for $\alpha\in[0,1)$.
	\qed\end{proof}

\vgap

Since $M\ge L$ in \eqref{eqL} and \eqref{eqM}, the results obtained in Corollaries \ref{corAGSeasy} and \ref{corAGS} indicate 
that the number of gradient evaluations of $\nabla f$ and $\nabla h$ that Algorithm \ref{algAGS} requires for computing 
an $\varepsilon$-solution of \eqref{eqSmoothProblem} can be bounded by $\cO(\sqrt{L/\varepsilon})$ and $\cO(\sqrt{M/\varepsilon})$, respectively. 
Such a result is particularly useful when $M$ is significantly larger, e.g., $M=\cO(L/\varepsilon)$, since the number of 
gradient evaluations of $\nabla f$ would not be affected at all by the large Lipschitz constant of the whole problem. 
It is interesting to compare the above result with the best known so-far complexity bound under the traditional black-box oracle assumption. 
If we treat problem \eqref{eqSmoothProblem} as a general smooth convex optimization and study its oracle complexity, i.e.,
under the assumption that there exists an \emph{oracle} that outputs $\nabla \phi(x)$ for any test point $x$ (and $\nabla \phi(x)$ only),
it has been shown that the number of calls to the oracle cannot be smaller than $\cO(\sqrt{(L+M)/\varepsilon})$ for computing an $\varepsilon$-solution \cite{nemirovski1983problem,nesterov2004introductory}. Under such ``single oracle'' assumption, the complexity bounds 
in terms of gradient evaluations of $\nabla f$ and $\nabla h$ are intertwined, and a larger Lipschitz constant $M$ 
will result in more gradient evaluations of $\nabla f$, even though there is no explicit relationship between $\nabla f$ and $M$.
However, the results in Corollaries \ref{corAGSeasy} and \ref{corAGS} suggest that we can study the oracle complexity of 
problem \eqref{eqSmoothProblem} based on the assumption of \emph{two separate oracles}: one oracle $\cO_{f}$ to compute $\nabla f$ for any 
test point $x$, and the other one $\cO_{h}$ to compute $\nabla h(y)$ for any test point $y$. In particular, these two oracles do not 
have to be called at the same time, and hence it is possible to obtain separate complexity bounds $\cO(\sqrt{L/\varepsilon})$ 
and $\cO(\sqrt{M/\varepsilon})$ on the number of calls to $\cO_{f}$ and $\cO_{h}$, respectively.


\vgap

We now consider a special case of \eqref{eqSmoothProblem} where $f$ is strongly convex. More specifically, we assume that there exists $\mu>0$ such that
\begin{align}
\label{eqmu}
\frac{\mu}{2}\|x - u\|^2 \le f(x) - l_f(u,x) \le \frac{L}{2}\|x - u\|^2,\ \forall x,u\in X.
\end{align}
Under the above assumption, we develop a multi-stage AGS algorithm that can skip computation of $\nabla f$ from time to time, and compute an $\varepsilon$-solution of \eqref{eqSmoothProblem} with
\begin{align}
\label{eqStronglyConvexBound}
\cO\left(\sqrt{\frac{L}{\mu}}\log\frac{1}{\varepsilon}\right)
\end{align}
gradient evaluations of $\nabla f$ (see Alagorithm~\ref{algAGSR}). 
It should be noted that, under the traditional black-box setting~\cite{nemirovski1983problem,nesterov2004introductory} where 
one could only access $\nabla \psi(x)$ for each inquiry $x$, the number of evaluations of $\nabla \psi(x)$ required to compute an $\varepsilon$-solution is bounded by
\begin{align}
	\label{eqPrevStrong}
	\cO\left(\sqrt{\frac{L+M}{\mu}}\log\frac{1}{\varepsilon}\right).
\end{align}

\begin{algorithm}
	\caption{\label{algAGSR}The multi-stage accelerated gradient sliding (M-AGS) algorithm}
	\begin{algorithmic}
		\State Choose $v_0\in X$, accuracy $\varepsilon$, iteration limit $N_0$, and initial estimate $\Delta_0$ such that $\phi(v_0)-\phi^*\le \Delta_0$. 
		\For {$s=1,\ldots,S$}
		\State Run the AGS algorithm with $x_0=v\tp[s]$, $N=N_0$, and parameters in Corollary \ref{corAGS}, and let $v_s=\xu_{N}$.
		\EndFor
		\State Output $v_S$.
	\end{algorithmic}
\end{algorithm}

\vgap

Theorem \ref{thmAGSR} below describes the main convergence properties of the M-AGS algorithm.

\vgap

\begin{theorem}
	\label{thmAGSR}
	Suppose that $M\ge L$ in \eqref{eqM} and \eqref{eqmu}, and that the prox-function $V(\cdot,\cdot)$ grows quadratically (i.e., \eqref{eqVquad} holds). 
	If the parameters in Algorithm \ref{algAGSR} are set to
	\begin{align}
		\label{eqAGSRpar}
		N_0=3\sqrt{\frac{2L}{\nu\mu}}\text{ and }S=\log_2\max\left\{\frac{\Delta_0}{\varepsilon}, 1\right\},
	\end{align}
	then its output $v_S$ must be an $\varepsilon$-solution of \eqref{eqSPP}. Moreover, the total number of 
	gradient evaluations of $\nabla f$ and $\nabla h$ performed by Algorithm \ref{algAGSR} can be bounded by
	\begin{align}
		\label{eqnf}
		N_{f}:=3\sqrt{\frac{2L}{\nu\mu}}\log_2\max\left\{\frac{\Delta_0}{\varepsilon}, 1\right\}
	\end{align}
	and
	\begin{align}
	\label{eqnh}
	\begin{aligned}
	N_{h}:=&(1+\ln 3)N_{f}\left(\sqrt{\frac{M}{L}}+1\right) 
	\\
	< &9\left(\sqrt{\frac{L}{\nu\mu}}+\sqrt{\frac{M}{\nu\mu}}\right)\log_2\max\left\{\frac{\Delta_0}{\varepsilon}, 1\right\},
	\end{aligned}
	\end{align}
	respectively.
	\end{theorem}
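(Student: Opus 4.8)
The plan is to reduce Theorem~\ref{thmAGSR} to a routine geometric-decay argument on top of the single-stage bound \eqref{eqk2} from Corollary~\ref{corAGS}, exploiting strong convexity to turn the additive $\cO(1/N^2)$ rate into a linear contraction per stage. First I would invoke Corollary~\ref{corAGS}: one run of the AGS algorithm with $x_0 = v_{s-1}$, $N = N_0$ outer iterations, and the parameters specified there yields, by \eqref{eqk2} with $u = x^*$ and $k = N_0$,
\begin{align}
\phi(v_s) - \phi^* \le \frac{9L}{\nu N_0(N_0+1)} V(v_{s-1}, x^*) \le \frac{9L}{\nu N_0^2} V(v_{s-1}, x^*).
\end{align}
Next I would use the quadratic-growth assumption \eqref{eqVquad} together with strong convexity \eqref{eqmu}: since $x^*$ minimizes $\phi$ and $f$ is strongly convex with modulus $\mu$ while $h$ is convex, we have $\phi(v_{s-1}) - \phi^* \ge \frac{\mu}{2}\|v_{s-1} - x^*\|^2 \ge \mu V(v_{s-1}, x^*)$, hence $V(v_{s-1},x^*) \le (\phi(v_{s-1})-\phi^*)/\mu$. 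Substituting this and the choice $N_0 = 3\sqrt{2L/(\nu\mu)}$, so that $N_0^2 = 18L/(\nu\mu)$, gives
\begin{align}
\phi(v_s) - \phi^* \le \frac{9L}{\nu N_0^2 \mu}\,(\phi(v_{s-1}) - \phi^*) = \frac{1}{2}\,(\phi(v_{s-1}) - \phi^*).
\end{align}

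Then I would iterate this contraction over $s = 1, \ldots, S$ starting from $\phi(v_0) - \phi^* \le \Delta_0$, obtaining $\phi(v_S) - \phi^* \le 2^{-S}\Delta_0$; with $S = \log_2\max\{\Delta_0/\varepsilon, 1\}$ this is at most $\varepsilon$, so $v_S$ is an $\varepsilon$-solution of \eqref{eqSPP} (here \eqref{eqSmoothProblem} and \eqref{eqSPP} coincide in the smooth composite setting, or one notes $\psi = \phi$ under the relevant identification). Finally, for the complexity bounds: each stage performs exactly $N_0$ gradient evaluations of $\nabla f$, so the total is $N_f = N_0 S = 3\sqrt{2L/(\nu\mu)}\,\log_2\max\{\Delta_0/\varepsilon,1\}$, matching \eqref{eqnf}. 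For $\nabla h$, each stage performs $\sum_{k=1}^{N_0} T_k$ inner iterations, which by the same estimate used in the proof of Corollary~\ref{corAGS} is at most $(1+\ln 3)N_0(\sqrt{M/L}+1)$; multiplying by $S$ stages gives $N_h = (1+\ln 3)N_f(\sqrt{M/L}+1)$, and the final numerical bound in \eqref{eqnh} follows from $1+\ln 3 < 3$ together with $\sqrt{M/L}+1 \le \sqrt{M/L}+\sqrt{M/L}$ being replaced by the cleaner split $(\sqrt{M}+\sqrt{L})/\sqrt{L}$ so that $N_f(\sqrt{M/L}+1) = 3(\sqrt{M/(\nu\mu)}+\sqrt{L/(\nu\mu)})\log_2\max\{\Delta_0/\varepsilon,1\}$ and $3(1+\ln 3) < 9$.

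I do not expect any serious obstacle; the argument is essentially a textbook restart scheme. The one point requiring care is the bookkeeping in the $\nabla h$ count: I must make sure that the bound on $\sum_{k=1}^{N_0} T_k$ established inside Corollary~\ref{corAGS} is applied \emph{per stage} (it was derived there for $N_f$ outer iterations, which here is $N_0$, and the two happen to agree stage-by-stage since $N_0$ plays the role of the iteration limit), and that the transition from $(1+\ln 3)N_0(\sqrt{M/L}+1)$ to the displayed numerical constant $9$ uses only $1+\ln 3 < 3$ and nothing sharper. A secondary minor point is confirming that the quadratic-growth hypothesis \eqref{eqVquad} is genuinely needed only to pass from $V(v_{s-1},x^*)$ back to the functional gap via strong convexity of $f$; since \eqref{eqmu} controls $\|v_{s-1}-x^*\|^2$ and \eqref{eqVnorm} lower-bounds $V$ by $\tfrac{\nu}{2}\|\cdot\|^2$, one in fact only needs $V(v_{s-1},x^*) \le \tfrac{1}{2}\|v_{s-1}-x^*\|^2$ to avoid losing a factor of $\nu$, which is exactly what \eqref{eqVquad} provides — I would state this explicitly so the constant $3\sqrt{2L/(\nu\mu)}$ in $N_0$ comes out exactly as claimed.
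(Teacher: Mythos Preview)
Your approach is essentially identical to the paper's: invoke \eqref{eqk2} from Corollary~\ref{corAGS} per stage, combine the quadratic-growth bound \eqref{eqVquad} with strong convexity \eqref{eqmu} to obtain the halving $\phi(v_s)-\phi^*\le\tfrac12(\phi(v_{s-1})-\phi^*)$, iterate over $S$ stages, and read off $N_f=N_0S$ and $N_h=(1+\ln 3)N_f(\sqrt{M/L}+1)$ from the per-stage inner-iteration count established in Corollary~\ref{corAGS}. The only slip is in your last line of arithmetic: $N_f(\sqrt{M/L}+1)=3\sqrt{2}\bigl(\sqrt{M/(\nu\mu)}+\sqrt{L/(\nu\mu)}\bigr)\log_2\max\{\Delta_0/\varepsilon,1\}$, not $3(\cdots)$ --- you dropped the $\sqrt{2}$ coming from $N_0=3\sqrt{2L/(\nu\mu)}$ --- so the constant you actually need is $3\sqrt{2}(1+\ln 3)\approx 8.9<9$, which still gives \eqref{eqnh}.
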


\begin{proof}
		With input $x_0=v\tp[s]$ and $N=N_0$, we conclude from \eqref{eqk2} in the proof of Corollary \ref{corAGS} (with $u=x^*$ a solution to \eqref{eqSmoothProblem}) that
		\begin{align}
			\phi(\xu_N)-\phi^*\le \frac{9L}{\nu N_0(N_0+1)}V(x_0,x^*) \le \frac{\mu}{2}V(x_0,x^*),
		\end{align}
		where the last inequality follows from \eqref{eqAGSRpar}. Using
		the facts that the input of the AGS algorithm is $x_0=v\tp[s]$ and that the output is set to $v_s=\xu_N$, and the relation \eqref{eqVquad}, we conclude
		\begin{align}
			\phi(v_s)-\phi^*\le \frac{\mu}{4}\|v\tp[s] - x^*\|^2\le \frac{1}{2}(\phi(v\tp[s])-\phi^*),
		\end{align}
		where the last inequality is due to the strong convexity of $\phi(\cdot)$. 
		It then follows from the above relation, the definition of $\Delta_0$ in Algorithm \ref{algAGSR}, and \eqref{eqAGSRpar} that
		\begin{align}
			\phi(v_S)-\phi^*\le \frac{1}{2^S}(\phi(v_0)-\phi^*) \le \frac{\Delta_0}{2^S} \le \varepsilon.
		\end{align}
		Comparing Algorithms \ref{algAGS} and \ref{algAGSR}, we can observe that the total number of gradient evaluations of $\nabla f$ in 
		Algorithm~\ref{algAGSR} is bounded by $N_0S$, and hence we have \eqref{eqnf}. Moreover, comparing \eqref{eqNf} and \eqref{eqNh} 
		in Corollary \ref{corAGS}, we conclude \eqref{eqnh}.
	\qed\end{proof}

\vgap

In view of Theorem~\ref{thmAGSR}, the total number of gradient evaluations of $\nabla h$ required by the M-AGS algorithm to 
compute an $\epsilon$-solution of \eqref{eqSmoothProblem} is the same as the traditional result \eqref{eqPrevStrong}. 
However, by skipping the  gradient evaluations of $\nabla f$ from time to time in the M-AGS algorithm,
the total number of gradient evaluations of $\nabla f$ is improved from \eqref{eqPrevStrong} to \eqref{eqStronglyConvexBound}. 
Such an improvement becomes more significant as the ratio $M/L$ increases.

\section{Application to composite bilinear saddle point problems}
\label{secSPP}

Our goal in this section is to show the advantages of the AGS method when applied to our motivating problem, i.e.,
the composite bilinear saddle point problem in \eqref{eqSPP}. In particular, we show in Section \ref{secAGSSPP} that the AGS algorithm can be used to solve \eqref{eqSPP} by incorporating the smoothing technique in \cite{nesterov2005smooth} and derive new complexity bounds 
in terms of the number of gradient computations of $\nabla f$ and operator evaluations of $K$ and $K^T$. 
Moreover, we demonstrate in Section~\ref{secStrongSPP} that even more significant saving on gradient computation of $\nabla f$ can be obtained
when $f$ is strongly convex in \eqref{eqSPP} by incorporating the multi-stage AGS method. 

\subsection{Saddle point problems}
\label{secAGSSPP}

Our goal in this section is to extend the AGS algorithm from composite smooth optimization to nonsmooth optimization. By incorporating the smoothing technique in \cite{nesterov2005smooth}, we can apply AGS to solve the composite saddle point problem \eqref{eqSPP}. Throughout this section, we assume that
the dual feasible set $Y$ in \eqref{eqSPP} is bounded, i.e., there exists $y_0\in Y$ such that
\begin{align}
\label{eqOmegaY}
	\Omega:=\max_{v\in Y}W(y_0,v)
\end{align}
is finite, where $W(\cdot,\cdot)$ is the prox-function associated with $Y$ with modulus $\omega$.

Let $\psi_\rho$ be the smooth approximation of $\psi$ defined in \eqref{eqSmoothApproxProblem}.
It can be easily shown (see \cite{nesterov2005smooth}) that
\begin{align}
	\label{eqpsipsirho}
	\psi_\rho(x)\le \psi(x)\le\psi_\rho(x)+\rho\Omega ,\ \forall x\in X.
\end{align}
Therefore, if
$\rho =\varepsilon/(2\Omega)$, 
then an $(\varepsilon/2)$-solution to \eqref{eqSmoothApproxProblem} is also an $\varepsilon$-solution to \eqref{eqSPP}. Moreover, 
it follows from Theorem 1 in \cite{nesterov2005smooth} that problem \eqref{eqSmoothApproxProblem} is 
given in the form of \eqref{eqSmoothProblem} (with $h(x)=h_{\rho}(x)$) and satisfies \eqref{eqM} with $M=\|K\|^2/(\rho \omega)$.
Using these observations, we are ready to summarize the convergence properties of the AGS algorithm for solving problem \eqref{eqSPP}.

\vgap

\begin{proposition}
	\label{corAGSS}
	Let $\varepsilon > 0$ be given and assume that $2\|K\|^2\Omega >\varepsilon\omega L$. If we apply the AGS method in 
	Algorithm \ref{algAGS} to problem \eqref{eqSmoothApproxProblem} (with $h=h_{\rho}$ and $\rho=\varepsilon/(2\Omega)$), in which the parameters 
	are set to \eqref{eqcondAGS1inner}--\eqref{eqcondAGSouter} with $M=\|K\|^2/(\rho \omega)$, then the total number 
	of gradient evaluations of $\nabla f$ and linear operator evaluations of $K$ (and $K^T$) in order to find 
	an $\varepsilon$-solution of \eqref{eqSPP} can be bounded by
	\begin{align}
	\label{eqNfSPP}
	N_{f}:=3\left(\sqrt{\frac{2LV(x_0,x^*)}{\nu\varepsilon}}\right)
	\end{align}
	and
	\begin{align}
	\label{eqKBoundAGS}
	N_K:=14\left(\sqrt{\frac{2LV(x_0,x^*)}{\nu\varepsilon}} + \frac{2\|K\|\sqrt{V(x_0,x^*)\Omega }}{\sqrt{\nu\omega}\varepsilon}\right),
	\end{align}
	respectively.	
\end{proposition}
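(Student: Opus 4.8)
The plan is to invoke Corollary~\ref{corAGS} directly, since the excerpt already establishes that the smoothing technique reduces \eqref{eqSPP} to an instance of \eqref{eqSmoothProblem}. First I would set $\rho=\varepsilon/(2\Omega)$ and recall from \eqref{eqpsipsirho} that any $(\varepsilon/2)$-solution of \eqref{eqSmoothApproxProblem} is an $\varepsilon$-solution of \eqref{eqSPP}; thus it suffices to bound the work needed to solve \eqref{eqSmoothApproxProblem} to accuracy $\varepsilon/2$. By Theorem~1 of \cite{nesterov2005smooth}, $h_\rho$ satisfies \eqref{eqM} with $M=\|K\|^2/(\rho\omega)=2\|K\|^2\Omega/(\varepsilon\omega)$, and the hypothesis $2\|K\|^2\Omega>\varepsilon\omega L$ guarantees $M>L$, so Corollary~\ref{corAGS} applies. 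Each evaluation of $\nabla h_\rho$ amounts to one evaluation of $K$ and one of $K^T$ (to compute the maximizer in \eqref{eqhrho} and then the gradient), so the number of linear operator evaluations is governed by $N_h$ from \eqref{eqNh}.

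Next I would substitute $\varepsilon/2$ for $\varepsilon$ and the above value of $M$ into \eqref{eqNf} and \eqref{eqNh}. For the gradient count, \eqref{eqNf} gives $N_f = 3\sqrt{LV(x_0,x^*)/(\nu(\varepsilon/2))} = 3\sqrt{2LV(x_0,x^*)/(\nu\varepsilon)}$, which is exactly \eqref{eqNfSPP}. For the operator count, \eqref{eqNh} yields
\begin{align}
N_h \le 7\left(\sqrt{\frac{MV(x_0,x^*)}{\nu(\varepsilon/2)}} + \sqrt{\frac{LV(x_0,x^*)}{\nu(\varepsilon/2)}}\right)
= 7\left(\sqrt{\frac{2MV(x_0,x^*)}{\nu\varepsilon}} + \sqrt{\frac{2LV(x_0,x^*)}{\nu\varepsilon}}\right).
\end{align}
It then remains to simplify $\sqrt{2M/\varepsilon}$ using $M=2\|K\|^2\Omega/(\varepsilon\omega)$: one gets $\sqrt{2M/(\nu\varepsilon)} = 2\|K\|\sqrt{\Omega/(\nu\omega)}/\varepsilon$, so the first term becomes $14\|K\|\sqrt{V(x_0,x^*)\Omega}/(\sqrt{\nu\omega}\varepsilon)$ and the second becomes $7\sqrt{2LV(x_0,x^*)/(\nu\varepsilon)}$. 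Bounding the latter by $14\sqrt{2LV(x_0,x^*)/(\nu\varepsilon)}$ and combining gives precisely the bound \eqref{eqKBoundAGS}. Finally I would remark that $N_K$, being (up to the factor-of-two slack) the same as $N_h$, also dominates the count of $K^T$ evaluations.

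The only genuinely delicate point is the accounting that one inner iteration of the ProxAG procedure costs exactly one pair of operator evaluations $(K,K^T)$ — this follows because evaluating $\nabla h_\rho(x)$ requires solving the inner maximization in \eqref{eqhrho}, which consumes $Kx$, and then forming $K^T y^*(x)$; the prox-mapping step \eqref{equt} involves only the simple function $J$ and $V$, no further operator applications. Everything else is the bookkeeping of substituting $\varepsilon\mapsto\varepsilon/2$ and the explicit $M$ into the already-proved Corollary~\ref{corAGS}, together with the crude numerical rounding ($7\sqrt2<14$, etc.) needed to reach the stated constants. I do not anticipate any obstacle beyond carrying these substitutions through cleanly.
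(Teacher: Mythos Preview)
Your approach is exactly the paper's: reduce to an $(\varepsilon/2)$-solution of \eqref{eqSmoothApproxProblem} via \eqref{eqpsipsirho}, check $M=\|K\|^2/(\rho\omega)>L$, and read off the counts from Corollary~\ref{corAGS}. The only slip is in the operator-count bookkeeping. The proposition is bounding the \emph{total} number of linear-operator evaluations (each $\nabla h_\rho$ call costs one $K$ and one $K^T$, hence two), so one should take $N_K=2N_h$. Doing this directly with the unrounded $N_h\le 7\bigl(\sqrt{2MV(x_0,x^*)/(\nu\varepsilon)}+\sqrt{2LV(x_0,x^*)/(\nu\varepsilon)}\bigr)$ and $\sqrt{2M/(\nu\varepsilon)}=2\|K\|\sqrt{\Omega}/(\sqrt{\nu\omega}\,\varepsilon)$ gives
\[
N_K\le 14\left(\sqrt{\frac{2LV(x_0,x^*)}{\nu\varepsilon}}+\frac{2\|K\|\sqrt{V(x_0,x^*)\Omega}}{\sqrt{\nu\omega}\,\varepsilon}\right),
\]
matching \eqref{eqKBoundAGS} on the nose with no need to round $7$ up to $14$. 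Your route --- rounding the $\sqrt{L}$ term and forgetting to double --- yields $14(\sqrt{2LV/(\nu\varepsilon)}+\|K\|\sqrt{V\Omega}/(\sqrt{\nu\omega}\varepsilon))$, which is not ``precisely'' \eqref{eqKBoundAGS} (the $\|K\|$ coefficient is off by $2$). Once you insert the factor of two in the right place, the proof is complete and identical to the paper's.
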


	\begin{proof}
		By \eqref{eqpsipsirho} we have $\psi_\rho^*\le \psi^*$ and $\psi(x)\le\psi_\rho(x)+\rho\Omega $ for all $x\in X$, and hence
		\begin{align}
			\psi(x) - \psi^*\le \psi_{\rho}(x)-\psi^*_{\rho} + \rho\Omega ,\ \forall x\in X.
		\end{align}
		Using the above relation and the fact that $\rho=\varepsilon/(2\Omega)$ we conclude that if $\psi_\rho(x) - \psi_\rho^*\le \varepsilon/2$, then $x$ is an $\varepsilon$-solution to \eqref{eqSPP}. 
		To finish the proof, it suffices to consider the complexity of AGS for computing an $\varepsilon/2$-solution of \eqref{eqSmoothApproxProblem}. 
		By Corollary \ref{corAGS}, the total number of gradient evaluations of $\nabla f$ is bounded by \eqref{eqNfSPP}. 
		By Theorem 1 in \cite{nesterov2005smooth}, the evaluation of $\nabla h_{\rho}$ is equivalent to $2$ evaluations of linear operators: 
		one computation of form $Kx$ for computing the maximizer $y^*(x)$ for problem \eqref{eqhrho}, and one computation of form $K^Ty^*(x)$ for 
		computing $\nabla h_{\rho}(x)$. Using this observation, and substituting $M=\|K\|^2/(\rho \omega)$ to \eqref{eqNh}, we conclude \eqref{eqKBoundAGS}.
	\qed\end{proof}
\vgap 

According to Proposition~\ref{corAGSS}, the total number of gradient evaluations of $\nabla f$ and linear operator evaluations of both $K$ and $K^T$ are bounded by
\begin{align}
	\label{eqOptSmoothXY} 
	\cO\left(\sqrt{\frac{L}{\varepsilon}}\right)
\end{align}
and
\begin{align}
\label{eqOptSPPXY} 
\cO\left(\sqrt{\frac{L}{\varepsilon}}+\frac{\|K\|}{\varepsilon}\right)
\end{align}
respectively, for computing an $\varepsilon$-solution of the saddle point problem \eqref{eqSPP}. Therefore, if $L\le \cO(\|K\|^2/\varepsilon)$, then 
the number of gradient evaluations of $\nabla f$ will not be affected by the dominating term $\cO(\|K\|/\varepsilon)$. 
This result significantly improves the best known so-far complexity results for solving the bilinear saddle point problem~\eqref{eqSPP}
in \cite{nesterov2005smooth} and \cite{lan2015gradient}. 
Specifically, it improves the complexity regarding number of gradient computations of $\nabla f$ from $\cO(1/\varepsilon)$ in \cite{nesterov2005smooth}
to $\cO(1/\sqrt{\varepsilon})$, and also improves the complexity regarding operator evaluations involving $K$ from $\cO(1/\varepsilon^2)$ in \cite{lan2015gradient} 
to $\cO(1/\varepsilon)$.


\subsection{Strongly convex composite saddle point problems}
\label{secStrongSPP}
In this subsection, we still consider the SPP in \eqref{eqSPP}, but assume that $f$ is strongly convex (i.e., \eqref{eqmu} holds). 
In this case, it has been shown previously in the literature that  $\cO(\|K\|/\sqrt{\varepsilon})$ 
first-order iterations, each one of them involving the computation of $\nabla f$, and the evaluation of $K$ and $K^T$,
are needed in order to  
compute an $\varepsilon$-solution of \eqref{eqSPP} (e.g., \cite{nesterov2005excessive}). However, we demonstrate in this subsection 
that the complexity with respect to the gradient evaluation of $\nabla f$ can be significantly improved from $\cO(1/\sqrt{\varepsilon})$ to $\cO(\log(1/\varepsilon))$.

Such an improvement can be achieved by properly restarting the AGS method applied to to solve a series of smooth optimization problem of form \eqref{eqSmoothApproxProblem}, in which the smoothing parameter $\rho$ changes over time. The proposed multi-stage AGS algorithm with dynamic smoothing is stated in Algorithm \ref{algAGSRS}.
\begin{algorithm}
	\caption{\label{algAGSRS}The multi-stage AGS algorithm with dynamic smoothing}
	\begin{algorithmic}
		\State Choose $v_0\in X$, accuracy $\varepsilon$, smoothing parameter $\rho_0$, iteration limit $N_0$, and initial estimate $\Delta_0$ of \eqref{eqSPP} such that $\psi(v_0)-\psi^*\le \Delta_0$. 
		\For {$s=1,\ldots,S$}
		\State Run the AGS algorithm to problem \eqref{eqSmoothApproxProblem} with $\rho=2^{-s/2}\rho_0$ (where $h=h_{\rho}$ in AGS). In the AGS algorithm, set $x_0=v\tp[s]$, $N=N_0$, and parameters in Corollary \ref{corAGS}, and let $v_s=\xu_{N}$.
		\EndFor
		\State Output $v_S$.
	\end{algorithmic}
\end{algorithm}

Theorem~\ref{thmAGSRS} describes the main convergence properties of Algorithm \ref{algAGSRS}.

\vgap

\begin{theorem}
	\label{thmAGSRS}
	Let $\varepsilon > 0$ be given and 
	suppose that the Lipschitz constant $L$ in \eqref{eqmu} satisfies
	\begin{align}
		\Omega \|K\|^2\max\left\{\sqrt{\frac{15\Delta_0}{\varepsilon}}, 1\right\}\ge 2\omega\Delta_0L.
	\end{align}
	Also assume that the prox-function $V(\cdot,\cdot)$ grows quadratically (i.e., \eqref{eqVquad} holds). 
	If the parameters in Algorithm \ref{algAGSRS} are set to
	\begin{align}
	\label{eqAGSRSpar}
	 N_0=3\sqrt{\frac{2L}{\nu\mu}},\ S=\log_2\max\left\{\frac{15\Delta_0}{\varepsilon}, 1\right\},\text{ and } \rho_0=\frac{4\Delta_0}{\Omega 2^{S/2}},
	\end{align}
	then the output $v_S$ of this algorithm must be an $\varepsilon$-solution \eqref{eqSPP}. Moreover, the total number of 
	gradient evaluations of $\nabla f$ and operator evaluations involving $K$ and $K^T$ performed by Algorithm \ref{algAGSRS} can be bounded by
	\begin{align}
	\label{eqnfS}
	N_{f}:=3\sqrt{\frac{2L}{\nu\mu}}\log_2\max\left\{\frac{15\Delta_0}{\varepsilon}, 1\right\}
	\end{align}
	and
	\begin{align}
	\label{eqnhS}
	N_{K}:=	18\sqrt{\frac{L}{\nu\mu}}\log_2\max\left\{\frac{15\Delta_0}{\varepsilon}, 1\right\} + \frac{56\sqrt{\Omega }\|K\|}{\sqrt{\mu\Delta_0\nu\omega}}\cdot\max\left\{\sqrt{\frac{15\Delta_0}{\varepsilon}}, 1\right\},
	\end{align}
	respectively.	
	\end{theorem}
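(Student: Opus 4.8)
The plan is to mimic the proof of Theorem~\ref{thmAGSR}, but to carefully track how the smoothing error accumulates across stages because the smoothing parameter $\rho$ now changes with $s$. First I would fix a stage $s$ and analyze a single run of the AGS algorithm applied to the smooth problem \eqref{eqSmoothApproxProblem} with $h=h_{\rho_s}$, where $\rho_s:=2^{-s/2}\rho_0$. By Theorem~1 in \cite{nesterov2005smooth}, this problem is of the form \eqref{eqSmoothProblem} with Lipschitz constant $M_s=\|K\|^2/(\rho_s\omega)$, and by \eqref{eqpsipsirho} we have $\psi_{\rho_s}(x)\le\psi(x)\le\psi_{\rho_s}(x)+\rho_s\Omega$. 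Applying \eqref{eqk2} from the proof of Corollary~\ref{corAGS} with $x_0=v_{s-1}$, $N=N_0$, and $u=x^*$ a solution to \eqref{eqSPP}, together with the choice $N_0=3\sqrt{2L/(\nu\mu)}$ and \eqref{eqVquad}, gives $\psi_{\rho_s}(v_s)-\psi_{\rho_s}^*\le \tfrac{\mu}{4}\|v_{s-1}-x^*\|^2$. Using the strong convexity of $\psi$ (inherited from $f$ via \eqref{eqmu}), the left side of the smoothing inequality, and $\psi_{\rho_s}^*\le\psi^*$, I would convert this into the one-step recursion
\begin{align}
\psi(v_s)-\psi^* \le \tfrac12\left(\psi(v_{s-1})-\psi^*\right) + \rho_s\Omega.
\end{align}

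Next I would unwind this recursion. Iterating from $s=1$ to $S$ yields
\begin{align}
\psi(v_S)-\psi^* \le \frac{1}{2^S}\left(\psi(v_0)-\psi^*\right) + \Omega\sum_{s=1}^{S}\frac{\rho_s}{2^{S-s}}
\le \frac{\Delta_0}{2^S} + \Omega\rho_0\sum_{s=1}^{S}\frac{2^{-s/2}}{2^{S-s}}.
\end{align}
The geometric-type sum $\sum_{s=1}^S 2^{s/2}/2^S = 2^{-S}\sum_{s=1}^S 2^{s/2}$ is dominated by a constant multiple of $2^{S/2}/2^S=2^{-S/2}$ (since $\sum_{s=1}^S 2^{s/2}\le \frac{\sqrt2}{\sqrt2-1}2^{S/2}$), so with $\rho_0=4\Delta_0/(\Omega 2^{S/2})$ the second term is bounded by an absolute constant times $\Delta_0/2^{S}$... wait, more carefully: $\rho_0 2^{-S/2}\cdot(\text{const}) = \frac{4\Delta_0}{\Omega}2^{-S}\cdot(\text{const})\cdot\Omega^{-1}$ — I would check the constants so that the total is at most $15\Delta_0/2^S$. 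Then the choice $S=\log_2\max\{15\Delta_0/\varepsilon,1\}$ gives $\psi(v_S)-\psi^*\le\varepsilon$, establishing the first claim.

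For the complexity bounds, each of the $S$ stages runs AGS with exactly $N_0$ outer iterations, so the number of gradient evaluations of $\nabla f$ is $N_0 S$, which is \eqref{eqnfS}. For the operator evaluations, at stage $s$ the number of $\nabla h_{\rho_s}$ evaluations (equivalently, pairs of $K,K^T$ applications, as in Proposition~\ref{corAGSS}) is, by \eqref{eqNh} in Corollary~\ref{corAGS} with $M=M_s=\|K\|^2/(\rho_s\omega)$, bounded by $(1+\ln 3)N_0(\sqrt{M_s/L}+1)$. Since $\sqrt{M_s/L}=\|K\|/\sqrt{\rho_s\omega L}=\|K\|2^{s/4}/\sqrt{\rho_0\omega L}$, summing over $s=1,\dots,S$ produces a geometric sum $\sum_{s=1}^S 2^{s/4}\le\frac{2^{1/4}}{2^{1/4}-1}2^{S/4}$, giving a bound proportional to $N_0\|K\|2^{S/4}/\sqrt{\rho_0\omega L}$ plus the lower-order term $N_0 S$. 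Substituting $\rho_0=4\Delta_0/(\Omega 2^{S/2})$ turns $2^{S/4}/\sqrt{\rho_0}$ into $(\text{const})\sqrt{\Omega}2^{S/2}/\sqrt{\Delta_0}=(\text{const})\sqrt{\Omega}\max\{\sqrt{15\Delta_0/\varepsilon},1\}/\sqrt{\Delta_0}$, and tracking the constants carefully against $N_0=3\sqrt{2L/(\nu\mu)}$ yields the two terms in \eqref{eqnhS}. The main obstacle I anticipate is bookkeeping: getting the numerical constants ($15$, $18$, $56$, and the $\rho_0$ prefactor $4$) to line up requires that the geometric sums be bounded by the right explicit constants and that the hypothesis $\Omega\|K\|^2\max\{\sqrt{15\Delta_0/\varepsilon},1\}\ge 2\omega\Delta_0 L$ be exactly what is needed to guarantee $M_s\ge L$ at every stage (so that Corollary~\ref{corAGS} applies) — I would verify that this inequality is equivalent to $M_S=\|K\|^2 2^{S/2}/(\rho_0\omega)\ge L$ after substituting $\rho_0$, and that $M_s\ge M_S\ge L$ for all $s\le S$ since $\rho_s$ is decreasing.
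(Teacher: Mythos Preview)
Your proposal is correct and follows essentially the same route as the paper: derive the one-step recursion $\psi(v_s)-\psi^*\le \tfrac12(\psi(v_{s-1})-\psi^*)+\rho_s\Omega$ via \eqref{eqk2}, strong convexity, and \eqref{eqpsipsirho}; telescope (the paper multiplies by $2^s$ before summing, which is equivalent to your direct unrolling); and then sum the per-stage operator-evaluation bound from \eqref{eqNh} as a geometric series in $2^{s/4}$. One small correction at the end: since $\rho_s=2^{-s/2}\rho_0$ is decreasing, $M_s=\|K\|^2/(\rho_s\omega)$ is \emph{increasing} in $s$, so $M_s\le M_S$ (not $\ge$), and the binding constraint for applying Corollary~\ref{corAGS} is $M_1\ge L$, not $M_S\ge L$; this is what the hypothesis on $L$ is meant to secure.
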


\begin{proof}
		Suppose that $x^*$ is an optimal solution to \eqref{eqSPP}. By \eqref{eqk2} in the proof of Corollary \ref{corAGS}, in the $s$-th stage of Algorithm \ref{algAGSRS} 
		(calling AGS with input $x_0=v_{s-1}$, output $v_{s}=\xu_N$, and iteration number $N=N_0$), we have
		\begin{align}
			& \psi_{\rho}(v_s)-\psi_{\rho}(x^*)= \psi_{\rho}(\xu_N)-\psi_{\rho}(x^*)
			\\
			\le & \frac{9L}{\nu N_0(N_0+1)}V(x_0, x^*)\le \frac{\mu}{2}V(x_0,x^*)\le \frac{\mu}{4}\|x_0-x^*\|^2=\frac{\mu}{4}\|v\tp[s]-x^*\|^2,
		\end{align}
		where the last two inequalities follow from \eqref{eqAGSRSpar} and \eqref{eqVquad}, respectively. Moreover, by \eqref{eqpsipsirho} we have $\psi(v_s)\le \psi_\rho(v_s)+\rho\Omega $ and $\psi^*=\psi(x^*)\ge \psi_\rho(x^*)$, hence
		\begin{align}
			\psi(v_s)-\psi^*\le \psi_\rho(v_s)-\psi_\rho(x^*)+\rho\Omega .
		\end{align}
		Combing the above two equations and using the strong convexity of $\psi(\cdot)$, we have
		\begin{align}
			& \psi(v_s)-\psi^*
			\le  \frac{\mu}{4}\|v\tp[s]-x^*\|^2+\rho\Omega  
			\\
			\le & \frac{1}{2}[\psi(v\tp[s])-\psi^*]+\rho\Omega  = \frac{1}{2}[\psi(v\tp[s])-\psi^*]+2^{-s/2}\rho_0\Omega,
		\end{align}
		where the last equality is due to the selection of $\rho$ in Algorithm \ref{algAGSRS}. Reformulating the above relation as
		\begin{align}
			2^s[\psi(v_s)-\psi^*]\le 2^{s-1}[\psi(v\tp[s])-\psi^*]+2^{s/2}\rho_0\Omega,
		\end{align}
		and summing the above inequalities from $s=1,\ldots,S$, we have
		\begin{align}
			& 2^S(\psi(v_S)-\psi^*)
			\\
			\le & \Delta_0 + \rho_0\Omega \sum_{s=1}^S 2^{s/2} 
			= \Delta_0 + \rho_0\Omega \frac{\sqrt{2}(2^{S/2}-1)}{\sqrt{2}-1}< \Delta_0+\frac{7}{2}\rho_0\Omega 2^{S/2}=15\Delta_0,
		\end{align}
		where the first inequality follows from the fact that $\psi(v_0)-\psi^*\le\Delta_0$ and
		the last equality is due to \eqref{eqAGSRSpar}. By \eqref{eqAGSRSpar} and the above result, we have $\psi(v_S)-\psi^*\le\varepsilon$. 
		Comparing the descriptions of Algorithms \ref{algAGS} and \ref{algAGSRS}, we can clearly see that 
		the total number of gradient evaluations of $\nabla f$ in Algorithm \ref{algAGSRS} is given $N_0S$, hence we have \eqref{eqnfS}.
		
		To complete the proof it suffices to estimate the total number of operator evaluations involving $K$ and $K^T$. 
		By Theorem 1 in \cite{nesterov2005smooth}, in the $s$-th stage of Algorithm \ref{algAGSRS}, the number of operator evaluations involving $K$
		is equivalent to twice the number of evaluations of $\nabla h_\rho$ in the AGS algorithm, which, in view of \eqref{eqNh} in Corollary \ref{corAGS}, is
		given by
		\begin{align}
			& 2(1+\ln 3)N\left(\sqrt{\frac{M}{L}}+1\right) 
			\\
			= & 2(1+\ln 3)N\left(\sqrt{\frac{\|K\|^2}{\rho\omega L}}+1\right) = 2(1+\ln 3)N_0\left(\sqrt{\frac{2^{s/2}\|K\|^2}{\rho_0\omega L}}+1\right),
		\end{align}
		where we used the relation $M=\|K\|^2/(\rho\omega)$ (see Section \ref{secAGSSPP}) in the first equality and  relations $\rho=2^{-s/2}\rho_0$ and $N=N_0$ from Algorithm \ref{algAGSRS} in the last equality. It then follows from the above result and \eqref{eqAGSRSpar}
		that the total number of operator evaluations involving $K$ in Algorithm \ref{algAGSRS} can be bounded by
		\begin{align}
			& \sum_{s=1}^S 2(1+\ln 3)N_0\left(\sqrt{\frac{2^{s/2}\|K\|^2}{\rho_0\omega L}}+1\right)
			\\
			= & 2(1+\ln 3)N_0S + \frac{2(1+\ln 3)N_0\|K\|}{\sqrt{\rho_0\omega L}}\sum_{s=1}^S 2^{s/4}
			\\
			= & 2(1+\ln 3)N_0S + \frac{3\sqrt{2}(1+\ln 3)\sqrt{\Omega }\|K\|2^{S/4}}{\sqrt{\mu\Delta_0\nu\omega}}\cdot\frac{2^{1/4}(2^{S/4}-1)}{2^{1/4}-1}
			\\
			< & 2(1+\ln 3)N_0S + \frac{56\sqrt{\Omega }\|K\|}{\sqrt{\mu\Delta_0\nu\omega}}\cdot 2^{S/2}
			\\
			< & 18\sqrt{\frac{L}{\nu\mu}}\log_2\max\left\{\frac{15\Delta_0}{\varepsilon}, 1\right\} + \frac{56\sqrt{\Omega }\|K\|}{\sqrt{\mu\Delta_0\nu\omega}}\cdot\max\left\{\sqrt{\frac{15\Delta_0}{\varepsilon}}, 1\right\}.
		\end{align}
	\qed\end{proof}

\vgap

By Theorem~\ref{thmAGSRS}, the total number of operator evaluations involving $K$ performed by
Algorithm \ref{algAGSRS} to compute an $\varepsilon$-solution of \eqref{eqSmoothProblem} can be bounded by
\begin{align}
	\cO\left(\sqrt{\frac{L}{\mu}}\log\frac{1}{\varepsilon}+\frac{\|K\|}{\sqrt{\varepsilon}}\right),
\end{align}
which matches with the best-known complexity result (e.g., \cite{nesterov2005excessive}). However, 
the total number of gradient evaluations of $\nabla f$ is now bounded by
\begin{align}
	\cO\left(\sqrt{\frac{L}{\mu}}\log\frac{1}{\varepsilon}\right),
\end{align}
which drastically improves existing results from $\cO(1/\sqrt{\varepsilon})$ to $\cO(\log(1/\varepsilon))$. 


\section{Numerical experiments}
\label{secNumerical}
In this section, we present some preliminary experimental results on the proposed AGS algorithm. The algorithms for all the experiments are implemented in MATLAB R2016a, running on a computer with 3.6 GHz Intel i7-4790 CPU and 32GB RAM. The parameters of Algorithm \ref{algAGS} are set to Corollary \ref{corAGS} and Proposition \ref{corAGSS} for solving
composite smooth problems and bilinear saddle point problems, respectively.
\subsection{Smooth optimization}
Our first experiment is conducted on a portfolio selection problem, which can be formulated as a quadratic programming problem
\begin{align}
	\label{eqMarkowitz}
	\min_{x\in\Delta^n}\phi(x):=x^T(A^T\cF A+\cD)x\ \st b^T x \ge \eta,
\end{align}
where $\Delta^n := \{ x \in \R^n | \sum_{i=1}^n x_i = 1, x_i \ge 0, i = 1, \ldots, n\}$.
The above quadratic programming problem describes minimum variance portfolio selection strategy in a market with $n$ trading assets and $m$ factors that drive the market. In particular, we are assuming a market return model (see \cite{goldfarb2003robust})
\begin{align}
q = b + A^Tf + \varepsilon,
\end{align}
where $q\in\R^n$ describes the random return with mean $b\in\R^n$, $f\in\R^m$ is a normally distributed vector with distribution $f\sim N(0,\cF)$ that describes the factors driving the market, $A\in\R^{m\times n}$ is the matrix of factor loadings of the $n$ assets, and $\varepsilon\sim N(0,\cD)$ is the random vector of residual returns. The return of portfolio $x$ now follows the distribution
\begin{align}
q^T x\sim N(b^T x, x^T(A^T\cF A+\cD)x),
\end{align}
and problem \eqref{eqMarkowitz} describes the objective of minimizing the risk (in terms of variance) while obtaining expected return of at least $\eta$. 
It can be easily seen that Problem \eqref{eqMarkowitz} is 
a special case of \eqref{eqSmoothProblem} with
\begin{align}
	& f(x) = x^T\cD x, h(x) = x^T(A^T\cF A)x,\ X=\Set{x\in\Delta^n|b^Tx\ge \eta}, 
	\\
	& M = \lambda_{max}(A^T\cF A), \text{ and } L = \lambda_{max}(\cD).
\end{align}
Here $\lambda_{max}(\cdot)$ denotes the maximum eigenvalue. It should be noted that in practice we have $m<n$ and the eigenvalues of $\cD$ are much smaller than that of $A^T\cF A$. Consequently, the computational cost for gradient evaluation of $\nabla f$ is more expensive than that of $\nabla h$, and the Lipschitz constants $L$ in \eqref{eqL} and $M$ in \eqref{eqM} satisfy $L<M$.

To generate the datasets for this experiment, first we fix $n=5000$, $\eta=1$, choose $m$ from $\left\{2^4,2^5,\ldots,2^9\right\}$, and generate $b$, $A$, and $\cF$ randomly. Here each component of $b$ is generated uniformly in $[0,5]$, each component of $A$ is generated uniformly in $[0,1]$, and $\cF=B^TB$ where $B$ is a $\lceil m/2\rceil\times m$ matrix whose components are generated from standard normal distribution. Then, we estimate $M$ from the relation $M = \lambda_{max}(A^T\cF A)$, choose $L$ from $\left\{2^{-2}M,2^{-3}M,\ldots,2^{-15}M\right\}$, and set $\cD=L(C^TC)/\lambda_{max}(C^TC)$, where $C$ is a $2500$ by $5000$ matrix whose components are generated from the standard normal distribution. With such setting of $\cD$, we have $\lambda_{max}(D)=L$. In order to demonstrate the efficiency of the AGS algorithm, we compare it with Nesterov's accelerated gradient method (NEST) in \cite{nesterov2004introductory}. For both algorithms, we set the prox-function to the entropy function $V(x,u) = \sum_{i=1}^{n}u^{(i)}\ln(u^{(i)}/x^{(i)})$, where $u^{(i)}$ denotes the $i$-th component of $u$.
Observe that the computational costs per iteration are different for NEST and AGS,  
since NEST evaluates both ${\nabla f}$ and ${\nabla h}$ in each iteration, while AGS can skip the evaluation of ${\nabla f}$ from time to time.
In order to have a fair comparison between these two algorithms, we first run NEST for $300$ iterations, and then AGS for the same amount of CPU time as NEST. 
In Figure \ref{figQPall}, we plot the ratio of the objective values obtained by NEST of AGS in this manner. If the ratio is less than 1 (indicated by red cross in the figure), then the performance of NEST is better than that of the AGS, and if the ratio is greater than 1 (indicated by blue round), then AGS outperforms NEST. 
It should be noted that we plot the ratio rather than the difference of the objective values obtained by these algorithms, mainly
because these objective values for difference instances are quite different. 
We can observe from Figure \ref{figQPall} that for most of the choices of $m$ and $L$, AGS outperforms NEST in terms of objective value. In particular, as $M/L$ increases and 
$m$ decreases, the difference 
on the performance of AGS and NEST becomes more and more significant. Therefore, we can conclude that AGS performs better than NEST when either the difference 
between $M$ and $L$ is larger or the computational cost for evaluating $\nabla h$ is cheaper. Such observations are consistent with our theoretical complexity analysis regarding AGS and NEST. 

\begin{figure}[!htbp]
	\centering
	\includegraphics[width=.7\linewidth]{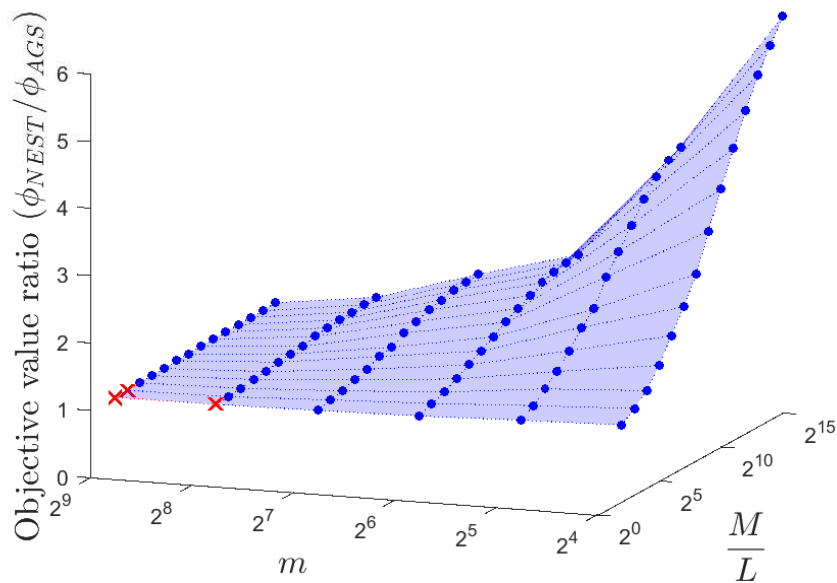}
	\caption{\label{figQPall} Ratio of objective values of AGS and NEST in terms of different choices of dimension $m$ and ratio $M/L$, after running the same amount of CPU time. Here $\phi_{AGS}$ and $\phi_{NEST}$ are the objective values corresponding to approximated solutions obtained by AGS and NEST within the same amount of CPU time. Cross markers in red imply that $\phi_{NEST}<\phi_{AGS}$, i.e., NEST outperforms AGS, while round markers in blue indicate that AGS outperforms NEST.}
\end{figure}

In addition to Figure \ref{figQPall}, we also report in Tables \ref{tabmAGS_Markowitz} and \ref{tabMLAGS_Markowitz} the numbers of gradient evaluations of $\nabla f$ and $\nabla h$ performed by
the AGS method with respect to different choices of dimension $m$ and ratio $M/L$.  As mentioned earlier, the numbers of gradient evaluations 
of $\nabla f$ and $\nabla h$ in 300 iterations of NEST are both 300. Several remarks are in place regarding the results obtained in these two tables. 
First, during the same amount of CPU time, the AGS is able to perform more gradient evaluations of $\nabla h$ by skipping
the computation of $\nabla f$. Noting that the Lipschitz constants of $\nabla h$ and $\nabla f$ satisfy $M>L$, by the complexity bounds \eqref{eqBoundLM}, the increased number of gradient evaluations of $\nabla h$ results in lower objective  function value of AGS. The advantage of AGS over NEST in terms of objective function value becomes more significant as the ratio $M/L$ increases.
Second, the lower computational cost we have for gradient evaluation of $\nabla h$, the more gradient evaluations of $\nabla h$ we can perform at each time when skipping gradient evaluation of $\nabla f$. Therefore, we can observe in Table \ref{tabmAGS_Markowitz} that the reduction of dimension $m$ leads to more evaluations of $\nabla h$, and more significant performance improvment
of AGS over NEST. 
Finally, it should be noted that AGS requires more evaluations of $\nabla h$ in order to obtain the same level of accuracy as NEST in terms of objective function value. In particular, we can observe from the case with $m=512$ in Table \ref{tabmAGS_Markowitz} and the case with $M/L=2^2$ in Table \ref{tabMLAGS_Markowitz} that AGS requires approximately triple amount of gradient evaluations of $\nabla h$ in order to obtain the same objective value as NEST. One plausible explanation is that the estimate of $M/L$ in Corollary \ref{corAGS} is conservative, resulting in 
a larger number of inner iterations $T_k$ in \eqref{eqcondAGSouter}. However, when $m$ is small or when the estimated ratio $M/L$ is high, AGS can perform much more numbers of gradient evaluations of $\nabla h$ to overcome the aforementioned disadvantage. It would be interesting to develop a scheme that provides more accurate estimate of the ratio $M/L$, possibly through some line search
procedures.

\begin{table}[!hbtp]
	\centering
	\caption{\label{tabmAGS_Markowitz}Numbers of gradient evaluations of $\nabla f$ and $\nabla h$ performed by the AGS method for solving \protect\eqref{eqMarkowitz} with $M/L=1024$, after running the same amount of CPU time as 300 iterations of NEST. Here $\phi_{AGS}$ and $\phi_{NEST}$ are the objective values corresponding to the approximated solutions obtained by AGS and NEST, respectively.}
	\begin{tabular}{|*{4}{c|}}
		\hline
		{$m$} & \# AGS evaluations of $\nabla f$ & \# AGS evaluations of $\nabla h$ & {$\ds{\phi_{NEST}}/{\phi_{AGS}}$}
		\\\hline
16 & 104 & 3743 & 382.5\%  
\\\hline
32 & 100 & 3599 & 278.6\%  
\\\hline
64 & 95 & 3419 & 183.3\%  
\\\hline
128 & 65 & 2339 & 152.8\%  
\\\hline
256 & 42 & 1499 & 120.1\%  
\\\hline
512 & 27 & 936 & 104.8\%  
\\\hline
	\end{tabular}
\ \\
\vgap
\vgap
	\renewcommand{\arraystretch}{1.1}
	\centering
	\caption{\label{tabMLAGS_Markowitz}Numbers of gradient evaluations of $\nabla f$ and $\nabla h$ performed by the AGS method for solving \protect\eqref{eqMarkowitz} with $m=64$, after running the same amount of CPU time as 300 iterations of NEST. Here $\phi_{AGS}$ and $\phi_{NEST}$ are the objective values corresponding to the approximated solutions obtained by AGS and NEST, respectively.}	
	\begin{tabular}{|*{4}{c|}}
		\hline
		{$M/L$} & \# AGS evaluations of $\nabla f$ & \# AGS evaluations of $\nabla h$ & {$\ds{\phi_{NEST}}/{\phi_{AGS}}$}
		\\\hline
$2^{15}$ & 23 & 4471 & 212.5\%  
\\\hline
$2^{14}$ & 31 & 4327 & 210.5\%  
\\\hline
$2^{13}$ & 41 & 4097 & 206.5\%  
\\\hline
$2^{12}$ & 57 & 4038 & 201.6\%  
\\\hline
$2^{11}$ & 72 & 3648 & 192.4\%  
\\\hline
$2^{10}$ & 95 & 3419 & 183.3\%  
\\\hline
$2^{9}$ & 114 & 2961 & 173.3\%  
\\\hline
$2^{8}$ & 143 & 2698 & 161.7\%  
\\\hline
$2^{7}$ & 164 & 2132 & 150.5\%  
\\\hline
$2^{6}$ & 186 & 1859 & 140.1\%  
\\\hline
$2^{5}$ & 210 & 1470 & 129.2\%  
\\\hline
$2^{4}$ & 225 & 1125 & 120.0\%  
\\\hline
$2^{3}$ & 258 & 1032 & 112.9\%  
\\\hline
$2^{2}$ & 253 & 759 & 104.5\%  
\\\hline
	\end{tabular}
\end{table}

\subsection{Image reconstruction}

In this subsection, we consider the following total-variation (TV) regularized image reconstruction problem:
\begin{align}
\label{eqTV}
\min_{x\in \R^n} \psi(x):= \frac{1}{2}\|Ax-b\|^2+ \eta\|Dx\|_{2,1}.
\end{align}
Here $x\in \R^n$ is the $n$-vector form of a two-dimensional image to be reconstructed, $\|Dx\|_{2,1}$ is the discrete form of the TV semi-norm where $D$ is the finite difference operator, $A$ is a measurement matrix describing the physics of data acquisition, and $b$ is the observed data.
It should be noted that problem \eqref{eqTV} is equivalent to
$$\min_{x\in \R^n}\frac{1}{2}\|Ax-b\|^2+ \max_{y\in Y}\eta \langle Dx, y \rangle ,$$
where $Y:=\{y\in\R^{2n}:\|y\|_{2,\infty}:=\max_{i=1,\ldots,n}\|(y^{(2i-1)}, y^{(2i)})^T\|_2\leq 1\}$. The above form can
be viewed as a special case of the bilinear SPP \eqref{eqSPP} with
\begin{align}
f(x):=\frac{1}{2}\|Ax-b\|^2, K:=\eta D, \text{ and }J(y) \equiv 0,
\end{align}
and the associated constants are $L=\lambda_{max}(A^TA)$ and $\|K\|=\eta\sqrt{8}$ (see, e.g., \cite{chambolle2004algorithm}). Therefore, as discussed in Section \ref{secAGSSPP}, such problem can be solved by AGS after incorporating the smoothing technique in \cite{nesterov2005smooth}.


In this experiment, the dimension of $A\in\R^{m\times n}$ is set to  $m=\lceil n/3\rceil$. Each component of $A$ is generated from a Bernoulli distribution, namely, it takes equal probability for the values $1/\sqrt{m}$ and $-1/\sqrt{m}$ respectively. We generate $b$ from a ground truth image $x_{true}$ with $b=Ax_{true}+\epsilon$, where $\epsilon\sim N(0,0.001I_n)$. Two ground truth images $x_{true}$ are used in the experiment, namely, the 256 by 256 ($n=65536$) image ``Cameraman'' and the 135 by 198 ($n=26730$) image ``Onion''. Both of them are built-in test images in the MATLAB image processing toolbox.
We compare the performance of AGS and NEST 
for each test image with different smoothing parameter $\rho$ in \eqref{eqhrho}, and TV regularization parameter $\eta$ in \eqref{eqTV}. For both algorithms, the prox-functions $V(x,u)$ and $W(y,v)$ are set to Euclidean distances $\|x-u\|_2^2/2$ and $\|y-v\|_2^2/2$ respectively.
In order to perform a fair comparison, we run NEST for 200 iterations first, and then run AGS with the same amount of CPU time. 

Tables \ref{tabAGSCameraman_LSTV} and \ref{tabAGSOnion_LSTV} show the comparison between AGS and NEST in terms of gradient evaluations of $\nabla f$, operator evaluations of $K$ and $K^T$, and objective values \eqref{eqTV}. It should be noted that in 200 iterations of the NEST algorithm, the number of gradient evaluations of $\nabla f$ and operator evaluations of $K$ and $K^T$ are 
given by 200 and 400, respectively. We can make a few observations about the results reported in these tables.  
First, by skipping gradient evaluations of $\nabla f$, AGS is able to perform more operator evaluation of $K$ and $K^T$ during the same amount of CPU time. Noting the complexity bounds \eqref{eqOptSmoothXY} and \eqref{eqOptSPPXY}, we can observe that the extra amount of operator evaluations $K$ and $K^T$ can possibly result in better approximate solutions obtained by CGS in terms of objective values. It should be noted that in problem \eqref{eqTV}, $A$ is a dense matrix while $D$ is a sparse matrix. Therefore, 
a very large number of extra evaluations of $K$ and $K^T$ can be performed for each skipped gradient evaluation of $\nabla f$.
Second, for the smooth approximation problem \eqref{eqSmoothApproxProblem}, the Lipschitz constant $M$ of $h_{\rho}$ is
given by $M=\|K\|^2/{\rho\omega}$. Therefore, for the cases with $\rho$ being fixed, larger values of $\rho$ result in larger norm $\|K\|$, and consequently larger Lipschitz constant $M$. Moreover, 
for the cases when $\eta$ is fixed, smaller values of $\rho$ also lead to larger Lipschitz constant $M$. For both cases, as the ratio of $M/L$ increases, 
we would skip more and more gradient evaluations of $\nabla f$, and allocate more CPU time for operator evaluations of $K$ and $K^T$, 
which results in more significant performance improvement of AGS over NEST. Such observations are also consistent with our previous theoretical 
complexity analysis regarding AGS and NEST for solving composite bilinear saddle point problems.

\begin{table}[H]
	\centering
	\caption{\label{tabAGSCameraman_LSTV}Numbers of gradient evaluations of $\nabla f$ and $\nabla h$ performed by the AGS method for solving \protect\eqref{eqTV} with ground truth image ``Cameraman'', after running the same amount of CPU time as 200 iterations of NEST. Here $\psi_{AGS}$ and $\psi_{NEST}$ are the objective values of \protect\eqref{eqTV} corresponding to approximated solutions obtained by AGS and NEST, respectively.}	
		\renewcommand{\arraystretch}{1.1}
	\begin{tabular}{|lc|*{4}{c|}}
		\hline
		\multicolumn{2}{|c|}{Problem} & \begin{minipage}{2.5cm} \# AGS evaluations of $\nabla f$\end{minipage} & \begin{minipage}{2.5cm}  \# AGS evaluations of $K$ and $K^T$ \end{minipage}& $\psi_{AGS}$ & $\psi_{NEST}$
		\\\hline
		$\eta = 1$,&$\rho=10^{-5}$ & 52 & 37416 & 723.8 & 8803.1
		\\\hline
		$\eta = 10^{-1}$,&$\rho=10^{-5}$ & 173 & 12728 & 183.2 & 2033.5
		\\\hline
		$\eta = 10^{-2}$,&$\rho=10^{-5}$ & 198 & 1970 & 27.2 & 38.3
		\\\hhline{|==*{4}{=|}}
		$\eta = 10^{-1}$,&$\rho=10^{-7}$ & 51 & 36514 & 190.2 & 8582.1
		\\\hline
		$\eta = 10^{-1}$,&$\rho=10^{-6}$ & 118 & 27100 & 183.2 & 6255.6
		\\\hline
		$\eta = 10^{-1}$,&$\rho=10^{-5}$ & 173 & 12728 & 183.2 & 2033.5
		\\\hline
		$\eta = 10^{-1}$,&$\rho=10^{-4}$ & 192 & 4586 & 183.8 & 267.2
		\\\hline
		$\eta = 10^{-1}$,&$\rho=10^{-3}$ & 201 & 2000 & 190.4 & 191.2
		\\\hline
		$\eta = 10^{-1}$,&$\rho=10^{-2}$ & 199 & 794 & 254.2 & 254.2
		\\\hline
	\end{tabular}
\ \\
\vgap
\vgap
	\centering
	\caption{\label{tabAGSOnion_LSTV}Numbers of gradient evaluations of $\nabla f$ and $\nabla h$ performed by the AGS method for solving \protect\eqref{eqTV}, after running the same amount of CPU time as 200 iterations of NEST. Here $\psi_{AGS}$ and $\psi_{NEST}$ are the objective values of \protect\eqref{eqTV} corresponding to approximated solutions obtained by AGS and NEST, respectively.}	
	\renewcommand{\arraystretch}{1.1}
	\begin{tabular}{|lc|*{4}{c|}}
		\hline
		\multicolumn{2}{|c|}{Problem} & \begin{minipage}{2.5cm} \# AGS evaluations of $\nabla f$\end{minipage} & \begin{minipage}{2.5cm}  \# AGS evaluations of $K$ and $K^T$ \end{minipage}& $\psi_{AGS}$ & $\psi_{NEST}$
		\\\hline
$\eta =1$,&$\rho=10^{-5}$ & 37 & 26312 & 295.6 & 2380.3
\\\hline
$\eta=10^{-1}$,&$\rho=10^{-5}$ & 149 & 10952 & 52.6 & 608.5
\\\hline
$\eta=10^{-2}$,&$\rho=10^{-5}$ & 193 & 1920 & 6.9 & 10.6
		\\\hhline{|==*{4}{=|}}
$\eta = 10^{-1}$,&$\rho=10^{-7}$ & 62 & 44344 & 52.9 & 2325.5
\\\hline
$\eta = 10^{-1}$,&$\rho=10^{-6}$ & 102 & 23380 & 52.6 & 1735.1
\\\hline
$\eta=10^{-1}$,&$\rho=10^{-5}$ & 149 & 10952 & 52.6 & 608.5
\\\hline
$\eta = 10^{-1}$,&$\rho=10^{-4}$ & 174 & 4154 & 52.8 & 70.0
\\\hline
$\eta = 10^{-1}$,&$\rho=10^{-3}$ & 192 & 1910 & 54.5 & 54.7
\\\hline
$\eta = 10^{-1}$,&$\rho=10^{-2}$ & 198 & 790 & 68.6 & 68.6
\\\hline
	\end{tabular}
\end{table}

\section{Conclusion}
\label{secConclusion}
We propose an accelerated gradient sliding (AGS) method for solving certain classes of structured convex optimization. The main feature of the proposed AGS method is that it could skip gradient computations of a smooth component in the objective function from time to time, while still maintaining the overall optimal 
rate of convergence for these probems. In particular, for minimizing the summation of two smooth convex functions, the AGS method can skip the gradient computation of the function with a
smaller Lipschitz constant, resulting in sharper complexity results than the best known so-far complexity bound under the traditional black-box assumption. Moreover, for solving a class of bilinear saddle-point problem, by applying the AGS algorithm to solve its smooth approximation, we show that the number of gradient evaluations of the smooth component may be reduced to $\cO(1/\sqrt{\varepsilon})$, which improves the previous $\cO(1/\varepsilon)$ complexity bound in the literature. More significant savings on gradient computations can be obtained when the objective function is strongly convex, with the number of gradient evaluations being reduced further to $\cO(\log(1/\varepsilon))$. Numerical experiments further confirm the potential advantages of these new optimization schemes
for solving structured convex optimization problems.

\bibliographystyle{myspmpsci}
\bibliography{yuyuan}{}

\end{document}